\newcommand{ \ep }{\varepsilon}
\newcommand{ \Lexp}{L^{\rm exp}_{\ep}}
\newcommand{ \Lg}{L^{\rm \Phi}_{\ep}}
\newcommand{ \Dep}{D_{\ep}}
\newcommand{ \DNep}{D^N_{\ep}}
\newcommand{ \OT}{\operatorname{OT}}
\newcommand{ \OTep}{\operatorname{OT}_{\ep}}
\newcommand{ \OTepd}{\operatorname{\widetilde{OT}}_{\ep}}
\newcommand{ \OTNep}{\operatorname{OT}^N_{\ep}}
\newcommand{\Fam}{\mathcal{F}}
\newcommand{\Pro}{\mathcal{P}}
\newcommand{\R}{\mathbb{R}}
\newcommand{\gammaep}{\gamma^{\ep}}
\newcommand{\restr}[1]{\lower3pt\hbox{$|_{#1}$}}
\newcommand{\Fcep}{\mathcal{F}^{(c, \ep,\Phi)}}
\newcommand{\ucep}{u^{(c,\ep,\Phi)}}
\newcommand{\vcep}{v^{(c,\ep,\Phi)}}
\newcommand{\RR}{\mathbb{R}} 
\newcommand{\NN}{\mathbb{N}} 
\newcommand{\Ficep}{\mathcal{F}_i^{(N,c, \ep,\Phi)}}
\newcommand{\argmmax}{\mathop{\mathrm{argmax}}\limits}
\theoremstyle{plain}
\newtheorem{theorem}{Theorem}[section]
\newtheorem{lemma}[theorem]{Lemma}
\newtheorem{proposition}[theorem]{Proposition}
\theoremstyle{definition}
\newtheorem{definition}{Definition}[section]
\theoremstyle{remark}
\title{Optimal Transport losses and Sinkhorn algorithm with general convex regularization}
\author{%
  Simone Di Marino \\
  Dipartimento di Matematica (DIMA) \\
  Universita di Genova\\
  Genova, Italy \\
  \texttt{simone.dimarino@unige.it} \\
  \And
  Augusto Gerolin \\
  Department of Theoretical Chemistry \\
  Vrije Universiteit Amsterdam\\
  Amsterdam, Netherlands \\
  \texttt{augustogerolin@gmail.com}
}
\begin{document}

\maketitle

\begin{abstract}
  
  We introduce a new class of convex-regularized Optimal Transport losses, which generalizes the classical Entropy-regularization of Optimal Transport and Sinkhorn divergences, and propose a generalized Sinkhorn algorithm. Our framework unifies many regularizations and numerical methods previously appeared in the literature. We show the existence of the maximizer for the dual problem, complementary slackness conditions,  providing a complete characterization of solutions for such class of variational problems. As a consequence, we study structural properties of these losses, including continuity, differentiability and provide explicit formulas for the its gradient. Finally, we provide theoretical guarantees of convergences and stability of the generalized Sinkhorn algorithm, even in the continuous setting. The techniques developed here are directly applicable also to study Wasserstein barycenters or, more generally, multi-marginal problems.
\end{abstract}





\section{Introduction}

In machine learning scenarios, one needs to compare two or more distributions 
supported on low-dimensional manifolds in much higher-dimensional spaces. The choice of the metric is crucial for tasks as regression, classification and generation, since the properties of the resulting average of a family of probability distributions (or barycenters) may vary significantly. 

Optimal Transport (OT) metrics
are better suited at capturing the geometry of the distribution than Euclidean distance, Kullback-Leibler or other
$f$-divergences. The topology induced  by the Wasserstein metric in the space of probability distributions is much weaker, having the ability, for example, to
handle measures with non-overlapping supports or supported in lower dimensional sets. 

Such features place OT as a promising tool in Machine Learning tasks, including generative modelling, classification and regression (e.g. \cite{AdlLun18, arjovsky2017, Bon16, Fro15, gulrajani17, SegCut15, TolBou18}). From a statistical and computational viewpoint, however, OT metrics still presents many challenges, particularly due to their instability, lack of smoothness as well as the difficulty to estimate their gradients in high dimension. 

Sinkhorn divergences introduced in \cite{GenCutPey} are based on the Shannon entropy-regularization of OT \cite{Cut} and provide a tractable method to train large scale generative models that can be be easily computed via Sinkhorn algorithm. 

In this paper, we introduce and study properties of discrete and continuous losses based on convex-regularized Optimal Transport.
\begin{equation}\label{intro:mainKL}
\OTep(\rho_1,\rho_2) = \inf_{\gamma\in \Pi(\rho_1,\rho_2)}\int_{\RR^d\times \RR^d}cd\gamma +\ep \int_{\RR^d\times \RR^d}\Phi(\gamma)d\left(\rho_1\otimes\rho_2\right),
\end{equation}
where $\rho_1,\rho_2$ are probability distributions, $c$ is a cost function (e.g. Euclidian distance), $\ep>0$ a regularization parameter and $\Phi$ a convex function.

Studying Optimal Transport problems with regularization other than the entropy is of interest since the resulting optimal transport plans are sparse, has a small support and are integrable in regular spaces \cite{LorMah19, LorManMey19}. Also, the approximation error of the primal problem are often smaller than with Shannon entropy regularization \cite{BloSegRol17}.

From a computational viewpoint, the generalized version of the Sinkhorn algorithm computing numerically \eqref{intro:mainKL} is introduced via the dual formulation. Understanding the structure of dual formulation of \eqref{intro:mainKL} is crucial to generalize the Sinkhorn algorithm and prove its convergence. As detailed in section \ref{sec:sinkhorn}, since $\Phi$ is not necessarily multiplicative as the entropy is, the solution optimal solution $\gammaep$ in \eqref{eq:mainKL} can not be decomposed in $\gammaep(x,y) = a(x)b(y)e^{-c/\ep}\rho_1\otimes\rho_2$ and therefore it is not possible to write explicitly an analogous Sinkhorn iterative system.  

Similarly to Sinkhorn-divergences, this new class of convex-regularized OT losses also interpolates between OT metric and Maximum Mean Discrepancies (MMD), which allow us, by choosing an appropriate range of the regularizer strenght $\ep$ to combine the favorable high-dimensional sample complexity of MMD with the geometric features of OT.

We point out the methods developed in this paper can be used to study convex-regularized Wasserstein Barycenters (e.g. \cite{AguCar,CutDou2014,DvuDviGasUriNed18,GaSw,LinHoCutJor19, Lui19,StaCaSol17}) or, more generally, multi-marginal optimal transport. This also includes a general version of the Sinkhorn algorithm adapted to the many-marginals case \cite{DMaGer19} and convex-regularizers $\Phi$. We introduce the setting and the main ideas in the Appendix \ref{app:multi}.

\subsection{Main contributions}

Many instances of $\Phi$ have been introduced in the literature previously, mainly for computational purposes, either to make the algorithm scale better with the number of points, or simply to induce stability. However, up to our knowledge, the use of these methods have not been studied from a theoretical standpoint in the current literature in both continuous and discrete settings. In this work, we formalize the convex-regularized OT problem in such a way that the convergence results observed in practice are reflected theoretically also. The algorithm structure is similar to that of the well-known Sinkhorn algorithm, although the proof strategy are not the same. 

The main constributions of this paper are:
\begin{itemize}
    \item We prove the existence of maximizers for dual problem of \eqref{intro:mainKL}, complementary slackness and, as a consequence, provide a complete characterization for the solutions of the primal problem \eqref{intro:mainKL} \, (ref. Theorem \ref{prop:equiv_comp}).
    \item We show that the convex-regularized losses $\OTep(\rho_1,\rho_2)$ are $weak^*$-continuous with respect to the marginals densities $\rho_1,\rho_2$ and, moreover, we derive explicit formulas for its gradient $\nabla \OTep(\rho_1,\rho_2)$ \, (ref. Proposition \ref{prop:OTcont}).  
    \item We introduce the generalized Sinkhorn algorithm that allow us to compute numerically \eqref{intro:mainKL} and prove convergence and stability (ref. Proposition \ref{prop:OTcont} and Theorem \ref{thm:convIPFP}). Our convergence results holds in both discrete and  continuous cases and are mesh-independent.
    \item Finally, we extend these results to convex-regularized Wasserstein Barycenter and more general multi-marginal optimal transport problems (ref. Appendix \ref{app:multi}).
\end{itemize}

\subsection{Related work} Numerical methods have been studied to solve OT losses with the problem \eqref{intro:mainKL} for different types of convex regularization \cite{CouRemTuiRak16, JanCutGra18,LorMah19,LorManMey19,MuzNocPatNei17}, including Sinkhorn-type methods for the Shannon entropy regularized case (e.g.  \cite{AltWeeRig17,Cut,DesPapRou18,GenCutPeyBac16,Lui18,Lui19}). In general, these methods exploit specific properties of the regularized function $\Phi$ itself and can not be extended to any convex function. A (generalized) conditional gradient approach used, for instance in \cite{FerPapPeyAuj14, RakFlaCou15}, could be applied in our case. However, the conditional gradient relies on iterative solving of a linearization, which can be slow in practice, or in the classical Sinkhorn algorithm, which requires to add an extra entropy-penalization in \eqref{intro:mainKL}. We refer to the above mentioned papers for a comparison between numerical methods based on Shannon entropy and different types of convex regularization.

Recently, also in \cite{LorMah19}  the problem of a generic convex-regularized optimal transport problem is tackled. Their setting differs from ours in two main points: they consider only the case when $X,Y$ are bounded Euclidean domains and continuous cost $c$, but they allow the penalization term to be the entropy with respect to a product measure of reference measures,which are possibly different from $\rho_0$ and $\rho_1$. While for the Entropy-regularized case assuming that the reference measures are $\rho_0$ and $\rho_1$ creates no harm (see \cite{DMaGer19}, section 1.4), in the case of general convex function we have no equivalence. This reflects into the fact that in \cite{LorMah19} the authors have to assume also some $L^{\infty}$ comparability between the reference measures and $\rho_0$ and $\rho_1$ and moreover the existence of dual potentials is in a weak space. While, in this work, we manage to get a priori estimates which allow us to prove also strong convergence for the Sinkhorn algorithm.

\section{Geometric losses for Machine Learning based on OT}
Let $(X,d_X)$ and $(Y,d_Y)$ be complete separable metric spaces, $\rho_1\in\mathcal{P}(X)$, $\rho_2\in \mathcal{P}(Y)$ be probability measures,and  $c:X\times Y\to \RR$ be a bounded cost function; additional hypotesis on the cost $c$ would imply better estimates for the optimal Kantorovich potential and better convergence, but we prefer to stick to the general case.

For the penalization part, let $\ep\geq0$ be a positive number and $\Phi:[0,+\infty]\to [0,+\infty]$ be a convex function, lower semi-continuous and superlinear at infinity; moreover we will assume also that $\Phi \in C^1((0, \infty))$ and $\Phi(1)=\Phi'(1)=0$ (or simply $\Phi$ be an Entropy). We point out that the assumption $\Phi(1)=\Phi'(1)=0$ is not restrictive since it can be obtained by simply adding a linear function to $\Phi$, which does not modify the minimizers in \eqref{eq:mainKL}.

We consider the \textit{Optimal Transport problem with general convex regularization}  
\begin{equation}\label{eq:mainKL}
\OTep(\rho_1,\rho_2) = \inf_{\gamma\in \Pi(\rho_1,\rho_2)}\int_{X\times Y}cd\gamma +\ep G(\gamma|\rho_1\otimes\rho_2),
\end{equation}
where $G:\mathcal{P}(X\times Y)\to\RR\cup \lbrace +\infty \rbrace$ is the $\Phi$-entropy of $\gamma$ with respect to $\rho_1 \otimes \rho_2$, that is
\[
G(\gamma|\rho_1\otimes\rho_2): =       \begin{cases} 
					\int_{X\times Y} \Phi\left(\dfrac{d\gamma}{d(\rho_1\otimes\rho_2)}\right) d\left(\rho_1\otimes\rho_2\right), \,  &\text{ if } \gamma \ll \rho_1\otimes \rho_2 \\
					 +\infty &\text{ otherwise }\\
     \end{cases}. \
\]
The notation $ \gamma \ll \rho_1\otimes \rho_2$  means that $\gamma$ is absolutely continuous with respect to $\rho_1\otimes \rho_2$, that is, $\gamma= \alpha \cdot \rho_1 \otimes \rho_2$, where $\alpha \in L^1(\rho_1 \otimes \rho_2)$ is called the density of $\gamma$ with respect to $\rho_1\otimes\rho_2$ and it is indicated by $\frac{d\gamma}{d(\rho_1\otimes\rho_2)}$ . 

The problem \eqref{eq:mainKL} is a natural generalisation of the celebrated \textit{entropic regularised of Optimal Transport}, which corresponds to the case  $\Phi(z) = z(\ln z-1)+1, \text{ for } z> 0$, introduced by Cuturi \cite{Cut}, see also \cite{Cut, CutPeyBook, DMaGer19, Idel16, LeoSurvey, Schr31} and references therein. In the particular case when $c(x,y) = d(x,y)^p, p\geq 1$ and $\ep = 0$, $W_p(\rho_1,\rho_2) = \sqrt[p]{\OT_{0}(\rho_1,\rho_2)}$ the problem \eqref{eq:mainKL} defines the $p$-Wasserstein distance  \cite{CutPeyBook, Vil03}. 

Besides the Shannon entropy, other types of convex functions $\Phi$ have been studied, including the quadratic regularization $\Phi(z) = \frac{1}{2}\vert z\vert^2$ \cite{EssSol18, LorMah19, LorManMey19} and the \textit{Tsallis entropy} $\Phi(z) = \frac{1}{p(p-1)} (z^p-p(z-1)),~p>1$ \cite{MuzNocPatNei17}. 

Similarly to the entropy-regularised Optimal Transport, the convex-regularised problem \eqref{eq:mainKL} can be express via its dual (Kantorovich) formulation.  We fist define the class the $\Lg$-spaces, which is the natural domain of the dual functional.


\begin{definition}[$\Lg$ spaces] Let $\ep>0$ be a positive number, $(X,d_X)$ be a complete separable metric space, $\rho$ be a probability measure in $X$, $\Phi$ an entropy and $\Psi= \Phi^*$ its Legendre conjugate. We define the set $\Lg(X,\rho)$ by  
\[
\Lg(X,\rho) = \left\lbrace u:X \to [-\infty, \infty[ \, : \,
u \text{ is measurable in } (X,\rho) \text{ and } \small{0<\int_X \Psi \left(u/\ep\right) \, d \rho < \infty}  \, \right\rbrace.
\]
\end{definition}
\paragraph{Dual functional:} Let $u\in \Lg(\rho_1), v\in \Lg(\rho_2)$ and consider the (Kantorovich) dual functional $\Dep:\Lg(\rho_1)\times \Lg(\rho_2)\to \R$
\begin{align}\label{eqn:dualdef}
\Dep(u,v) &= \int_X ud\rho_1 + \int_Y vd\rho_2    - \ep\int_{X\times Y} \Psi\left({\frac{u(x)+v(y)-c(x,y)}{\ep}}\right)d(\rho_1\otimes\rho_2).
\end{align}

We can defined the Kantorovich dual problem by
\begin{equation}\label{kantodual}
\sup \left\lbrace \Dep(u,v) : u\in\Lg(\rho_1) \text{ and } v\in\Lg(\rho_2) \right\rbrace.
\end{equation}

Although it is not always the case, closed-form expressions for the $\Psi$ are available, e.g. in the quadratic regularization and Tsallis entropy. For the Shannon entropy case we have \cite{Cut, GenChiBacCutPey,GenCutPeyBac16}
\[\Psi \left({\frac{u(x)+v(y)-c(x,y)}{\ep}}\right) = \exp\left({\frac{u(x)+v(y)-c(x,y)}{\ep}}\right).
\]

Note that if $u^+ \in L^1(\rho_1)$ and $v^+ \in L^1(\rho_2)$ then $\Dep(u,v)< \infty$. Moreover, as it will become clear later, we want to compute the supremum of $\Dep$, then in order to have $\Dep(u,v)>-\infty$ we need $u\in \Lg(\rho_1), v\in \Lg(\rho_2)$.

 We illustrate some ideas developed in this paper by analyzing the case of discrete distributions.
\subsection{An equivalent formulation in the discrete case} Let us for simplicity reduce to the case where $\hat{\rho_1}$ and $\hat{\rho_2}$ are discrete measures, i.e. finite sum of Diracs of the form $\hat{\rho_1} = \sum^I_{i=1}\rho^i_1\delta_{x_i}$ and $\hat{\rho_2} = \sum^J_{j=1}\rho^j_2\delta_{y_j}$, where $x_i\in X, \forall i\in I$ and $y_j\in Y, \forall j\in J$ and $\rho^i_1$ and $\rho^j_2$ are the weights respectively of $\rho_1$ and $\rho_2$. One may think of these discrete measure that they are sampling of a continuous measure or evaluations of continuous densities on a grid. In this setting the problem \eqref{eq:mainKL} can been rewritten via a cost matrix $c\in \RR_+^{I\times J}$ defined by $c_{i,j} = c(x_i,y_j)$: 
\begin{equation}\label{intro:KLdiscrete}
\OTep(\hat{\rho_1},\hat{\rho_2}) =\min \left\lbrace \sum_{i,j}c_{i,j}\gamma_{i,j} +\ep \sum_{i,j}\Phi\left(\frac{\gamma_{i,j}}{\hat{\rho}_{1,i}\hat{\rho}_{2,j}}\right)\hat{\rho}_{1,i}\hat{\rho}_{2,j} : \gamma 1_J = \hat{\rho}_1, \gamma^T 1_I = \hat{\rho}_2 \right\rbrace.
\end{equation}
Now, at least heuristically, we use Lagrange multipliers $u\in \RR^I$ and $v\in \RR^J$ to reinforce the marginal constraints. We thus consider the Lagrangian $L(\gamma,u,v)$
\begin{equation}
L(\gamma,u,v) = \sum_{i,j}c_{i,j}\gamma_{i,j} +\ep \sum_{i,j}\Phi\left(\frac{\gamma_{i,j}}{\hat{\rho_1}\hat{\rho_2}}\right)\hat{\rho_{1,i}}\hat{\rho_{2,j}} - u\cdot(\gamma 1_J - \hat{\rho_1}) + v\cdot( \gamma 1_I - \hat{\rho_2}). 
\end{equation}
In particular, the first order condition reads
\[
\dfrac{\partial L(\gamma,u,v)}{\partial \gamma_{i,j}} = 0 \quad \implies \quad  c_{i,j} + \ep(\Phi)'\left(\frac{\gamma_{i,j}}{\hat{\rho_1}\hat{\rho_2}}\right) - u_i - v_j = 0.
\]
Let is assume for a moment that $\Phi'$ is invertible, then $\gamma = (\Phi')^{-1}((u+v-c)/\ep)$ and so, since by Lagrange duality we have $(\Phi')^{-1} = \Psi'$, we get $\gamma = \Psi'((u+v-c)/\ep)\hat{\rho_1}\hat{\rho_2}$.

Notice that if we consider instead the problem with reference measures ${\mathfrak m}_1,{\mathfrak m}_2$ (e.g. the uniform measures or Lebesgue measure in the continuous setting), $\hat{\rho_1},\hat{\rho_2} \ll {\mathfrak m}_1,{\mathfrak m}_2$ \cite{LorMah19}
\begin{equation}\label{eq:m1m2}
\min \left\lbrace \sum_{i,j}c_{i,j}\gamma_{i,j} +\ep \sum_{i,j}\Phi\left(\frac{\gamma_{i,j}}{{\mathfrak m}_{1,i}{\mathfrak m}_{2,j}}\right){\mathfrak m}_{1,i}{\mathfrak m}_{2,j} : \gamma 1_J = \hat{\rho}_1, \gamma^T 1_I = \hat{\rho}_2 \right\rbrace,
\end{equation}
we then have that the optimal $\overline{\gamma}$ is solving \eqref{eq:m1m2} $\overline{\gamma} = \Psi'((u+v-c)/\ep){\mathfrak m}_1{\mathfrak m}_2$. As previously remarked, in the continuous case the relation between \eqref{intro:KLdiscrete} and \eqref{eq:m1m2} are not obvious for any convex-function $\Phi$ other than the Shannon entropy.

\paragraph{Computing gradients $\nabla \OTep(\hat{\rho_1},\hat{\rho_2})$:}in section \ref{sec:OTeploss} we show that the dual formulation \eqref{eqn:dualdef} admits a maximizer, which allow us in particular, to compute the gradient of the $\Phi-$Sinkhorn-loss $\nabla \OTep(\hat{\rho_1},\hat{\rho_2})$. In fact, the problem \eqref{intro:KLdiscrete} in discrete space is given by
\begin{equation}\label{intro:KLdiscretedual}
\OTep(\hat{\rho_1},\hat{\rho_2}) =\sup_{u,v} \left\lbrace \sum^I_{i=1}u_i\hat{\rho_{1,i}} + \sum^J_{j=1}v_j\hat{\rho_{2,j}} -\ep \sum_{i,j}\Psi\left(\frac{u_i+v_j-c_{i,j}}{\ep}\right)\hat{\rho_{1,i}}\hat{\rho_{2,j}} \right\rbrace.
\end{equation}
Assume for a moment that the supremum is archived by functions $\overline{u}$ and $\overline{v}$. Then, a direct computation shows that the gradient of $\OTep(\hat{\rho_1},\hat{\rho_2})$ exists and is given by 
\[
\nabla \OTep(\hat{\rho_1},\hat{\rho_2}) = \left(\sum_i\overline{u}_i-\ep\sum_{j}\Psi\left(\frac{\bar{u}_i+\bar{v}_j-c_{i,j}}{\ep}\right)\hat{\rho}_{2,j},\sum_j\overline{v}_j-\ep\sum_{i}\Psi\left(\frac{\bar{u}_i+\bar{v}_j-c_{i,j}}{\ep}\right)\hat{\rho}_{1,i}\right).
\]
In the continuous setting, the existence of maximizer for the dual problem \eqref{kantodual} and the existence of the gradient of $\OTep$ are not immediate (see Theorem \ref{thm:kanto2Nmax} and Proposition \ref{prop:OTcont}). The technical results that guarantee the existence of a maximizer in \eqref{kantodual} are stated in Lemmas \ref{lemma:F1F2} and \ref{lemma:betterpotentials}, below.

In fact, these results allow us to obtain a priori estimates for the maximizing sequences in \eqref{kantodual} via the $(c,\ep,\Phi)$-transforms defined in equations \eqref{eq:F1} and \eqref{eq:F2}. We anticipate that an analogous strategy is also used to prove the convergence of the Sinkhorn algorithm in section \ref{sec:sinkhorn} and Appendix \ref{sec:convergenceIPFP}.

\section{Characterization of the convex-regularized OT-loss via duality}\label{sec:OTeploss}

\subsection{A priori estimates and $(c,\ep,\Phi)$-transforms}
\label{sec:bounded}

\begin{definition}[$(c,\ep,\Phi)$-transform]
Let $(X,d_X)$, $(Y,d_Y)$ be complete separable metric spaces, $\ep>0$ be a positive number, $\rho_1 \in \mathcal{P}(X)$ and $\rho_2 \in \mathcal{P}(Y)$ be probability measures, $\Phi$ be an Entropy and let $c$ be a bounded cost on $X \times Y$. 
The $(c,\ep, \Phi)$-transform  $\Fcep:\Lg(\rho_1)\to L^0(\rho_2)$ is defined by
\begin{equation}\label{eq:F1}
 \Fcep ( u ) (y) \in \argmmax \lbrace \Dep(u,v) : u \in \Lg(X,\rho_1) \rbrace.
\end{equation}
Analogously, we define the $(c,\ep, \Phi)$-transform $\Fcep:\Lexp(\rho_2)\to L^0(\rho_1)$ by
\begin{equation}\label{eq:F2}
 \Fcep ( v ) (x) \in \argmmax \lbrace \Dep(u,v) : v \in \Lg(Y,\rho_2) \rbrace.
\end{equation}
Whenever it will be clear we denote $\vcep=\Fcep(v)$ and $\ucep=\Fcep(u)$, in an analogous way to the classical $c$-transform. 
\end{definition}

There are few cases when we can obtain explicity formulas for the $(c,\ep,\Phi)$-transform. For example if $\Phi$ is the Shannon entropy, the $(c,\ep,\Phi)$-transform corresponds to the \textit{SoftMin} operator \cite{DMaGer19,FeyFXVAmaPey}
\[
\ucep(y) = -\ep\ln\left(\int_{X}e^{(u(x)-c(x,y))/\ep}d\rho_1(x) \right).
\] 
In this case, it is easy to see that the $(c,\ep,\Phi)$-transform is consistent with the classical $c$-transform \cite{CutPeyBook,Vil03} when $\ep\to0^{+}$: $\ucep(y) = u^{c}(y) + O(\ep)$. 

\begin{lemma}\label{lemma:F1F2} Let $(X,d_X)$, $(Y,d_Y)$ be complete separable metric spaces, $\ep>0$, $c:X\times Y\to \RR$ and $u\in \Lg(\rho_1),v\in \Lg(\rho_2)$. Then 
\begin{itemize}
    \item[(i)] If $c$ is a bounded function, then $\ucep \in L^{\infty}(\rho_1)$ and $\vcep \in L^{\infty}(\rho_2)$. Moreover, ${\rm osc}(\ucep),{\rm osc}(\vcep)\leq 2\Vert c\Vert_{\infty}$. \item[(ii)] If $c$ is $L$-Lipschitz ($\omega$-continuous), then $\ucep,\vcep$ are $L$-Lipschitz ($\omega$-continuous).
\end{itemize}
\end{lemma}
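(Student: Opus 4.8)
The plan is to exploit the first-order optimality condition defining the $(c,\ep,\Phi)$-transform and then read off both the pointwise bound and the regularity from that condition. Fix $u\in\Lg(\rho_1)$ and write $w(y)=\ucep(y)$. Since $v\mapsto\Dep(u,v)$ is concave (the map $t\mapsto -\Psi(t/\ep)$ is concave because $\Psi=\Phi^*$ is convex) and strictly concave where $\Psi$ is strictly convex, the maximizer is characterized by differentiating under the integral sign in the $v$-variable: for $\rho_2$-a.e.\ $y$,
\[
1-\int_X \Psi'\!\left(\frac{u(x)+w(y)-c(x,y)}{\ep}\right)d\rho_1(x)=0 ,
\]
i.e.\ $\int_X \Psi'((u(x)+w(y)-c(x,y))/\ep)\,d\rho_1(x)=1$. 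Since $\Phi'(1)=0$ we have $\Psi'(0)=(\Phi')^{-1}(0)=1$, and $\Psi'$ is nondecreasing. This is the single identity from which everything follows; symmetric statements hold for $\vcep$.

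For part (i), suppose first that $w(y)>-\inf_x(u(x)-c(x,y))$ by more than $2\|c\|_\infty$ on a set of positive measure — more precisely, compare $w(y)$ with the value $w_0(y)$ that would solve the one-point problem. The clean way: because $\Psi'$ is monotone, if $w(y)\ge \alpha$ for a constant $\alpha$, then $\Psi'((u(x)+w(y)-c(x,y))/\ep)\ge \Psi'((u(x)+\alpha-c(x,y))/\ep)$ pointwise; choosing $\alpha$ so that the right-hand integrand is forced to exceed $1$ on average contradicts the identity, giving an upper bound on $w(y)$ in terms of $\|c\|_\infty$ and the "size" of $u$, and symmetrically a lower bound. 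The key observation that removes the dependence on $\|u\|$ is that the transform only depends on $u$ through the identity, and the two bounds can be made to differ by exactly the oscillation of $c$: concretely, if $w(y_1)$ and $w(y_2)$ both satisfy the identity, then replacing $c(x,y_1)$ by $c(x,y_2)$ changes the argument of $\Psi'$ by at most $2\|c\|_\infty/\ep$, so by monotonicity of $\Psi'$ and the fact that the integral must stay equal to $1$, we get $|w(y_1)-w(y_2)|\le 2\|c\|_\infty$, which is precisely ${\rm osc}(\ucep)\le 2\|c\|_\infty$. Boundedness $\ucep\in L^\infty(\rho_1)$ — strictly, $L^\infty(\rho_2)$ — then follows once we know $\ucep$ is finite at one point, which holds because $u\in\Lg(\rho_1)$ guarantees the dual functional is not identically $-\infty$ in $v$, so the argmax is attained at a finite value somewhere.

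For part (ii), fix $y_1,y_2$ and let $w_1=w(y_1)$, $w_2=w(y_2)$. Both satisfy the identity with $c(\cdot,y_1)$ and $c(\cdot,y_2)$ respectively. If $c$ is $L$-Lipschitz in $y$ uniformly in $x$, then $|c(x,y_1)-c(x,y_2)|\le L\,d_Y(y_1,y_2)$ for all $x$. Subtracting $L\,d_Y(y_1,y_2)$ from $w_1$ makes the integrand in the $y_1$-identity pointwise $\le$ the $y_2$-integrand shifted, and monotonicity of $\Psi'$ together with the constraint that both integrals equal $1$ forces $|w_1-w_2|\le L\,d_Y(y_1,y_2)$; the $\omega$-continuous case is identical with $L\,d_Y(y_1,y_2)$ replaced by $\omega(d_Y(y_1,y_2))$. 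I expect the main obstacle to be the rigorous justification of differentiation under the integral sign (and the a.e.\ validity of the Euler–Lagrange identity) when $\Psi'$ may blow up or vanish and $u$ is merely in $\Lg$; one handles this by a truncation/monotone-convergence argument and by using that the concave functional $\Dep(u,\cdot)$ attains its max, so any competitor perturbation $v+t\varphi$ with bounded $\varphi$ yields the one-sided inequalities that combine to the identity. Once the identity is in hand, steps (i) and (ii) are short monotonicity arguments requiring no smoothness of $\Psi$ beyond monotonicity of $\Psi'$ and the normalization $\Psi'(0)=1$.
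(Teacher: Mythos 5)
Your proposal is correct and follows essentially the same route as the paper's proof: you isolate the first-order condition $\int_X \Psi'((u(x)+\ucep(y)-c(x,y))/\ep)\,d\rho_1(x)=1$, note $\Psi'(0)=1$, and then use monotonicity of $\Psi'$ together with the pointwise bound $|c(x,y)-c(x,y')|\le 2\|c\|_\infty$ (resp.\ $L\,d_Y(y,y')$ or $\omega(d_Y(y,y'))$) to compare the conditions at $y$ and $y'$ and deduce the oscillation/modulus bound. The only cosmetic difference is that the paper phrases the comparison through the monotone map $\beta^y(t)=\int_X\Psi'((u+t-c(\cdot,y))/\ep)\,d\rho_1$ and records the auxiliary inequalities \eqref{eqn:comparison}, but the mechanism is identical to yours.
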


\begin{proof} The strategy of the proofs for $(i)$ and $(ii)$ are similar, so we prove here second part of the statement and leave the part $(i)$ at Appendix \ref{app:apriori}. Moreover, we prove directly the stronger version for the $\omega$-continuity, since it implies Lipschitzianity. Assume $c$ is $\omega$-continuous. Let $u\in\Lg(\rho_1)$, $v\in\Lg(\rho_2)$ and $\Psi = \Phi^*$. By definition, we have that  $\ucep(y)$ can be defined pointwisely by
\[
\ucep (y) :=\argmmax_{t\in\RR} \, \alpha^y(t)  \qquad \text{ where } \qquad \alpha^y(t) := t  - \ep\int_{X}\Psi((u+t-c)/\ep).
\]
Notice that since $\Phi \in C^1$ we have that $\Psi$ is strictly concave; in particular $\alpha^y(t)$ is also strictly concave and  $(\alpha^y)'(t_0) = 0$, if and only if, 
\[
1  = \int_{X}\Psi'((u(x)+t_0-c(x,y))/\ep)d\rho_1(x) =:  \beta^{y}(t_0). 
\]
Then we can define $\ucep$ as the unique function such that
$$1 = \int_{X}\Psi'((u(x)+\ucep(y)-c(x,y))/\ep)d\rho_1(x), \quad \forall~ y\in Y.$$

Since $\Psi'$ is a increasing function in $t$, $\beta^y(t)$ is also increasing in $t$. Assume that  $t_{y}$ and $\tilde{t}_{\tilde{y}}$ are such that $\beta^{y}(t_y) = \beta^{\tilde{y}}(\tilde{t}_{\tilde{y}})$, then we have
\begin{align*}
\beta^{y}(t_y) &= \int_{X}\Psi'((u(x)+t_y-c(x,y))/\ep))d\rho_1(x) \\
&\geq \int_{X}\Psi'((u(x)+t_y-c(x,\tilde{y})-\omega(d_Y(y,\tilde{y})))/\ep)d\rho_1(x) = \beta^{\tilde{y}}(\tilde{t}_{\tilde{y}}-\omega(d_Y(y,\tilde{y})),
\end{align*}
which implies that $t_y \geq \tilde{t}_{\tilde{y}}-\omega(d_Y(y,\tilde{y}))$. By exchanging the roles of $\tilde{t}_{\tilde{y}}$  we conclude that $\vert \tilde{t}_{\tilde{y}} -  t_y\vert \leq \omega(d_Y(y,\tilde{y}))$. But taking $\ucep(y)= t_y$ and $\ucep(\tilde{y}) = \tilde{t}_{\tilde{y}}$, we get that $\ucep$ is $\omega$-continuous. 
\end{proof}
The next two lemmas shows that if $(u,v)$ are admissible for the dual problem, then there exist a better couple of potentials $(u^*,v^*)$ obtained via the $(c,\ep,\Phi)$-transform that increase the value of $\Dep$ in \eqref{eqn:dualdef} and are more regular. The proof of Lemmas \ref{lemma:dual} and \ref{lemma:betterpotentials} are in Appendix \ref{app:apriori}.

\begin{lemma}\label{lemma:dual} Let $(X,d_X)$, $(Y,d_Y)$ be complete separable metric spaces, $\ep>0$ be a positive number, $\rho_1 \in \mathcal{P}(X)$ and $\rho_2 \in \mathcal{P}(Y)$ be probability measures, $\Phi$ be an Entropy, $\Psi = \Phi^*$, $c$ be a bounded cost on $X \times Y$ and let us consider $\Dep: \Lg(\rho_1) \times \Lg(\rho_2)\to\mathbb{R}$ defined as in \eqref{eqn:dualdef}. Then
\begin{equation}\label{est:optcond}
D_{\ep}(u,\ucep) \geq  D_{\ep}(u,v), \forall ~v \in \Lg(\rho_2), \text{ and } D_{\ep}(u,\ucep) =  D_{\ep}(u,v) \text{ iff } v = \ucep.
\end{equation}
\end{lemma}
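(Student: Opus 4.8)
The plan is to show that, for fixed $u \in \Lg(\rho_1)$, the map $v \mapsto \Dep(u,v)$ is maximized uniquely at $v = \ucep$. First I would isolate the only part of $\Dep(u,v)$ that depends on $v$, namely
\[
J(v) := \int_Y v \, d\rho_2 - \ep \int_{X\times Y} \Psi\left(\frac{u(x)+v(y)-c(x,y)}{\ep}\right) d(\rho_1\otimes\rho_2).
\]
Using Fubini (the integrand is bounded above by $v^+ \in L^1$ once we restrict to candidates with $v^+$ integrable, and below similarly), I would rewrite $J(v) = \int_Y \left[ v(y) - \ep \int_X \Psi\big((u(x)+v(y)-c(x,y))/\ep\big)\, d\rho_1(x) \right] d\rho_2(y)$, so that $J(v) = \int_Y \alpha^y(v(y))\, d\rho_2(y)$ with $\alpha^y$ exactly the strictly concave one-variable function from the proof of Lemma \ref{lemma:F1F2}. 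Since the integral decouples pointwise in $y$ and $\ucep(y)$ is by construction (Lemma \ref{lemma:F1F2}) the unique maximizer of $\alpha^y$, we get $\alpha^y(v(y)) \le \alpha^y(\ucep(y))$ for a.e.\ $y$, with equality iff $v(y) = \ucep(y)$; integrating against $\rho_2$ gives both the inequality $\Dep(u,\ucep) \ge \Dep(u,v)$ and the equality case.

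A couple of points need care. One is measurability: I must know $y \mapsto \ucep(y)$ is $\rho_2$-measurable so that $\Dep(u,\ucep)$ makes sense; this follows because $\ucep$ is characterized by the implicit relation $\int_X \Psi'((u+\ucep(y)-c)/\ep)\,d\rho_1 = 1$ with $\beta^y$ continuous and strictly increasing, so $\ucep$ is obtained by a measurable selection / inverse-function argument (and under the Lipschitz or bounded-oscillation hypotheses of Lemma \ref{lemma:F1F2} it is even continuous). The second is that $\ucep$ actually lies in $\Lg(\rho_2)$, i.e.\ $0 < \int_Y \Psi(\ucep/\ep)\,d\rho_2 < \infty$ — but for the stated inequality this is not strictly needed; it suffices that $\Dep(u,\ucep)$ is well-defined in $[-\infty,+\infty)$, and the argmax property makes it $\ge \Dep(u,v) > -\infty$ for any admissible $v$, so it is finite.

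The main obstacle is the integrability bookkeeping that legitimizes the ``maximize under the integral sign'' step: I need to be sure $J(v)$ is not of indeterminate form $\infty - \infty$ and that $\alpha^y(v(y))$ is a genuine (extended-real) integrable-from-above function of $y$, so that comparing it pointwise with $\alpha^y(\ucep(y))$ and integrating is valid. Here I would use that $\Psi = \Phi^*$ is superlinear's dual, hence finite and $C^1$ on its domain with $\Psi \ge 0$, together with boundedness of $c$ and the a priori bound ${\rm osc}(\ucep)\le 2\|c\|_\infty$ from Lemma \ref{lemma:F1F2}, to dominate everything; for a general admissible $v$ one splits into $\{v \ge \ucep\}$ and $\{v < \ucep\}$ and uses monotonicity of $\Psi'$ to keep the negative part controlled. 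Once this is in place the rest is the elementary strict-concavity argument already carried out pointwise in Lemma \ref{lemma:F1F2}.
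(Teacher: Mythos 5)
Your proposal takes essentially the same route as the paper's proof: both isolate the $v$-dependent part of $\Dep(u,\cdot)$, rewrite it by Fubini as $\int_Y \alpha^y(v(y))\,d\rho_2(y)$, and invoke the strict concavity of $\alpha^y$ together with the defining argmax property of the $(c,\ep,\Phi)$-transform to obtain both the inequality and the equality case. The only genuine difference is that you explicitly flag the measurability of $y\mapsto\ucep(y)$ and the integrability bookkeeping needed to justify the decoupling, points the paper leaves implicit.
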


\begin{lemma}\label{lemma:betterpotentials} Let us consider $u \in \Lg(\rho_1)$ and $v \in \Lg(\rho_2)$. Then there exist $u^* \in \Lg(\rho_1)$ and $v^* \in \Lg(\rho_2)$ such that
\[
D_{\ep}(u,v) \leq D_{\ep}(u^*,v^*), \quad \text{ and } \quad \| u^* \|_{\infty},\| v^* \|_{\infty} \leq 2\| c\|_{\infty}.
\]
Moreover we can choose $a \in \mathbb{R}$ such that $u^*= (v+a)^{(c,\ep,\Phi)}$ and $v^*=(u^*)^{(c,\ep,\Phi)}$.
\end{lemma}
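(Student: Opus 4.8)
The plan is to throw $u$ away entirely and build both potentials out of $v$: pass to the transform $\vcep$, recentre it by a suitable additive constant $a$, and transform once more. Two facts, already available, make this work. First, since $\rho_1,\rho_2$ are probability measures, $\Dep(u-a,v+a)=\Dep(u,v)$ for every $a\in\RR$, and the transform is correspondingly anti\hyp equivariant: from the pointwise identity $1=\int_Y\Psi'\big((v(y)+\vcep(x)-c(x,y))/\ep\big)\,d\rho_2(y)$ established in the proof of Lemma~\ref{lemma:F1F2} one reads off $(v+a)^{(c,\ep,\Phi)}=\vcep-a$. Second, Lemma~\ref{lemma:dual} (and its mirror image, obtained by interchanging the two marginals) says that passing to a transform never decreases $\Dep$: $\Dep(\vcep,v)\ge\Dep(u,v)$ for all $u\in\Lg(\rho_1)$, and $\Dep\big(w,w^{(c,\ep,\Phi)}\big)\ge\Dep(w,v)$ for all $w\in\Lg(\rho_1)$, $v\in\Lg(\rho_2)$. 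The role of the shift is only to move the (bounded) oscillation interval of $\vcep$ into a ball of the right radius.

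Concretely, set $u_1:=\vcep$; this is legitimate because $v\in\Lg(\rho_2)$, and since $c$ is bounded Lemma~\ref{lemma:F1F2}(i) gives $u_1\in L^\infty(\rho_1)$ with ${\rm osc}(u_1)\le 2\|c\|_\infty$. Let $M$ be the $\rho_1$-essential supremum of $u_1$, put $a:=M-\|c\|_\infty$, and define
\[
u^*:=u_1-a=(v+a)^{(c,\ep,\Phi)},\qquad v^*:=(u^*)^{(c,\ep,\Phi)}.
\]
Then the essential supremum of $u^*$ equals $\|c\|_\infty$ and its essential infimum equals $\|c\|_\infty-{\rm osc}(u_1)\ge-\|c\|_\infty$, so $\|u^*\|_\infty\le\|c\|_\infty\le 2\|c\|_\infty$. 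For $v^*$ one uses its defining relation $1=\int_X\Psi'\big((u^*(x)+v^*(y)-c(x,y))/\ep\big)\,d\rho_1(x)$: since $\Psi'$ is strictly increasing with $\Psi'(0)=(\Phi')^{-1}(0)=1$, for $\rho_2$-a.e.\ $y$ the integrand is $\le 1$ on a set of positive measure and $\ge 1$ on a set of positive measure, hence $v^*(y)$ lies between the $\rho_1$-essential infimum and the $\rho_1$-essential supremum of $x\mapsto c(x,y)-u^*(x)$; therefore $\|v^*\|_\infty\le\|c\|_\infty+\|u^*\|_\infty\le 2\|c\|_\infty$.

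That $u^*\in\Lg(\rho_1)$ and $v^*\in\Lg(\rho_2)$ then follows from these $L^\infty$ bounds: $\int\Psi(u^*/\ep)\,d\rho_1$ and $\int\Psi(v^*/\ep)\,d\rho_2$ are finite because $\Psi$ is real\hyp valued and continuous ($\Phi$ being superlinear), and strictly positive because $\Psi$ is non-decreasing with $\Psi>0$ on $(0,\infty)$ while, by the estimates above, $u^*$ and $v^*$ are positive on sets of positive measure (the degenerate case $c\equiv 0$ aside). It remains to compare values. Writing $v_1:=u_1^{(c,\ep,\Phi)}$, anti\hyp equivariance gives $v^*=v_1+a$, so shift invariance yields $\Dep(u^*,v^*)=\Dep(u_1-a,v_1+a)=\Dep(u_1,v_1)=\Dep\big(u_1,u_1^{(c,\ep,\Phi)}\big)$, and the monotonicity statements give
\[
\Dep\big(u_1,u_1^{(c,\ep,\Phi)}\big)\ \ge\ \Dep(u_1,v)\ =\ \Dep(\vcep,v)\ \ge\ \Dep(u,v),
\]
which is exactly $\Dep(u,v)\le\Dep(u^*,v^*)$.

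The only step that requires real attention is the bookkeeping of the shift $a$: it has to recentre the already oscillation\hyp controlled function $u_1$ in such a way that a \emph{single} choice simultaneously produces the bound on $\|u^*\|_\infty$, the bound on $\|v^*\|_\infty$ coming from the second transform, and the $\Lg$-membership of both potentials. Conceptually there is nothing new: the genuine a priori estimate is the oscillation bound of Lemma~\ref{lemma:F1F2}(i), which is used here as a black box, together with the optimality property of Lemma~\ref{lemma:dual}.
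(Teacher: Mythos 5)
Your proof is correct and follows essentially the same route as the paper's: take $u_1=\vcep$, recentre by a constant $a$ using the oscillation bound of Lemma~\ref{lemma:F1F2}(i), set $v^*=(u^*)^{(c,\ep,\Phi)}$, and combine the translation invariance of $\Dep$ with the optimality of the transform (Lemma~\ref{lemma:dual}) to close the chain of inequalities. The only cosmetic differences are that you re-derive the $\|v^*\|_\infty\le 2\|c\|_\infty$ bound directly from the defining relation and $\Psi'(0)=1$ rather than citing \eqref{eqn:comparison}, and you apply the translation invariance of $\Dep$ one step earlier in the comparison chain.
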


\subsection{Existence of a maximizer for the dual problem and complementary slackness}\label{sec:existmax}

\begin{theorem}\label{thm:kanto2Nmax} Let $(X,d_X)$, $(Y,d_Y)$ be complete separable metric spaces, $c:X\times Y\to \RR$ be a bounded cost, $\rho_1 \in \mathcal{P}(X)$, $\rho_2 \in \mathcal{P}(Y)$ be probability measures and $\ep>0$ be a positive number. Consider the problem
\begin{equation}\label{eq:dualitySep}
\sup \left\{ D_{\ep}(u,v) \; : \; u \in \Lg(\rho_1) , v \in \Lg(\rho_2) \right\}.
\end{equation}
Then the supremum in \eqref{eq:dualitySep} is attained for a unique couple $(u_0,v_0)$ (up to the trivial tranformation $(u,v) \mapsto (u+a,v-a)$). In particular we have $u_0 \in L^{\infty}(X,\rho_1)$ and  $v_0 \in L^{\infty}(Y,\rho_2).$
\end{theorem}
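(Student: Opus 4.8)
The plan is to run the direct method of the calculus of variations on a carefully chosen maximizing sequence. First I would take an arbitrary maximizing sequence $(u_n,v_n)$ for \eqref{eq:dualitySep}. By Lemma \ref{lemma:betterpotentials} I may replace each $(u_n,v_n)$ by $(u_n^*,v_n^*)$ with $D_\ep(u_n,v_n)\le D_\ep(u_n^*,v_n^*)$ and $\|u_n^*\|_\infty,\|v_n^*\|_\infty\le 2\|c\|_\infty$; this new sequence is still maximizing, and moreover we may arrange $v_n^* = (u_n^*)^{(c,\ep,\Phi)}$, so it is an image of the $(c,\ep,\Phi)$-transform. Relabel it $(u_n,v_n)$. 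Now all the potentials live in a fixed ball of $L^\infty$, so I have uniform $L^\infty$ bounds.

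Next I would extract a limit. The clean route is weak-$*$ compactness: $u_n$ is bounded in $L^\infty(\rho_1)=(L^1(\rho_1))^*$ and $v_n$ in $L^\infty(\rho_2)$, so up to subsequence $u_n\rightharpoonup^* u_0$ and $v_n\rightharpoonup^* v_0$ with $\|u_0\|_\infty,\|v_0\|_\infty\le 2\|c\|_\infty$. The linear terms $\int u_n\,d\rho_1+\int v_n\,d\rho_2$ pass to the limit by definition of weak-$*$ convergence. For the nonlinear term I need upper semicontinuity of $(u,v)\mapsto D_\ep(u,v)$, equivalently lower semicontinuity of $(u,v)\mapsto \int_{X\times Y}\Psi\big((u(x)+v(y)-c(x,y))/\ep\big)\,d(\rho_1\otimes\rho_2)$ along this sequence. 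Here I would invoke that $\Psi=\Phi^*$ is convex (and, by the standing assumptions, superlinear is not needed here — convexity and lower semicontinuity suffice), so the integral functional is convex in $(u,v)$ and strongly lower semicontinuous on $L^1$; combined with the uniform $L^\infty$ bound (which makes the relevant set weakly-$*$ compact and metrizable on bounded sets, and lets me pass from weak-$*$ to, say, weak-$L^1$ convergence on the finite-measure spaces) convexity upgrades this to weak lower semicontinuity. An even more elementary alternative, which sidesteps any subtlety about weak-$*$ vs weak convergence with a nonlinear integrand, is to first pass to $u_n = (v_n+a_n)^{(c,\ep,\Phi)}$ form and use Lemma \ref{lemma:F1F2}(i): the oscillation bound plus normalization gives an equi-bounded family; if one additionally assumes (or reduces to the case) that $c$ is, e.g., continuous one even gets equicontinuity and hence uniform convergence on compacta via Ascoli, after which the limit passes trivially. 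In the full generality of merely bounded $c$ I would stay with the convexity/weak-lsc argument. Either way I conclude $D_\ep(u_0,v_0)\ge \limsup_n D_\ep(u_n,v_n) = \sup$, so $(u_0,v_0)$ is a maximizer, and the $L^\infty$ bounds are inherited.

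Finally, uniqueness. Suppose $(u_0,v_0)$ and $(u_1,v_1)$ are both maximizers. Since $\Phi\in C^1((0,\infty))$, $\Psi$ is strictly convex on the relevant range, so $t\mapsto \Psi(t/\ep)$ is strictly convex; hence $D_\ep$ is concave in $(u,v)$ and strictly concave along any direction in which $u(x)+v(y)-c(x,y)$ genuinely changes. Testing concavity at the midpoint of the two maximizers forces $u_0(x)+v_0(y) = u_1(x)+v_1(y)$ for $(\rho_1\otimes\rho_2)$-a.e. $(x,y)$; a standard Fubini/marginal argument then yields $u_1 = u_0 + a$ and $v_1 = v_0 - a$ for some constant $a\in\RR$, which is exactly the asserted trivial transformation. (Alternatively, uniqueness follows from Lemma \ref{lemma:dual}: at a maximizer one must have $v_0 = (u_0)^{(c,\ep,\Phi)}$ and $u_0 = (v_0)^{(c,\ep,\Phi)}$, and the strict-concavity clause in \eqref{est:optcond} pins these down up to the additive constant.)

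The main obstacle is the upper semicontinuity of the nonlinear term along the extracted sequence: one must be careful that weak-$*$ $L^\infty$ convergence does not a priori control a nonlinear integrand, so the argument genuinely needs either the convexity of $\Psi$ (to get weak lower semicontinuity of the integral) or, if one is willing to assume more on $c$, an upgrade of the $(c,\ep,\Phi)$-transformed sequence to uniform convergence via Lemma \ref{lemma:F1F2}. Everything else — the $L^\infty$ reduction, passing the linear terms to the limit, and uniqueness via strict concavity — is routine given Lemmas \ref{lemma:F1F2}, \ref{lemma:dual} and \ref{lemma:betterpotentials}.
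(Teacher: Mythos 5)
Your argument follows the same route as the paper's proof: reduce to a maximizing sequence with uniform $L^\infty$ bounds $\le 2\|c\|_\infty$ (via Lemma~\ref{lemma:betterpotentials}, built on Lemma~\ref{lemma:F1F2}), extract a weak-$*$ limit by Banach--Alaoglu, pass the linear terms and use convexity of $\Psi$ for lower semicontinuity of the nonlinear term, then conclude uniqueness from strict concavity together with Lemma~\ref{lemma:dual}. Your proposal is correct and, if anything, a bit more explicit than the paper about the weak-$*$/weak-$L^1$ point and about which lemma actually supplies the $L^\infty$ normalization.
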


\begin{proof}
Now, we are going to show that the supremum is attainded in the right-hand side of \eqref{eq:dualitySep}. Let $(u_n)_{n\in\NN} \subset \Lg(\rho_1) $ and $(v_n)_{n\in \NN} \subset \Lg(\rho_2)$ be  maximizing sequences. Due to Lemma  \ref{lemma:F1F2}, we can suppose that $u_n\in L^{\infty}(\rho_1)$, $v_n\in L^{\infty}(\rho_2)$ and $\Vert u_n\Vert_{\infty},\Vert v_n\Vert_{\infty} \leq 2 \| c \|_{\infty}$. Then by Banach-Alaoglu theorem there exists subsequences $(u_{n_k})_{n_k\in\NN}$ and $(v_{n_k})_{n_k\in\NN}$ such that $u_{n_k}\rightharpoonup \overline{u}$ and $v_{n_k}\rightharpoonup \overline{v}$. In particular, $\tilde{u}_{n_k}+\tilde{v}_{n_k}-c\rightharpoonup \overline{u}+\overline{v}-c$. 

First, notice that since $t \mapsto \Psi(t)$ is a convex function ($\Psi = \Phi^*$), we have
\begin{align*}
\liminf_{n\to\infty} \int_{X\times Y}\Psi\left(\frac{u_n+v_n-c}{\ep}\right)d(\rho_1\otimes\rho_2) &= \liminf_{n\to\infty} \int_{X\times Y}\Psi\left(\frac{u_n+v_n-c}{\ep}\right)d(\rho_1\otimes\rho_2) \\
&\geq  \int_{X\times Y}\Psi\left(\frac{\overline{u}+\overline{v}-c}{\ep}\right)d(\rho_1\otimes\rho_2).
\end{align*}
Moreover,
\begin{align*}
\sup_{u,v} \Dep(u,v) &= \lim_{n\to\infty}\left\lbrace\int_X u_nd\rho_1 + \int_Y v_nd\rho_2 - \ep\int_{X\times Y}\Psi\left(\frac{u_n+v_n-c}{\ep}\right) d(\rho_1\otimes\rho_2) \right\rbrace \\
&\leq \lim_{n\to\infty}\left\lbrace\int_X u_n d\rho_1 + \int_Y v_n d\rho_2  \right\rbrace - \ep \liminf_{ n \to \infty}\left\lbrace \int_{X\times Y}\Psi\left(\frac{u_n+v_n-c}{\ep}\right)d(\rho_1\otimes \rho_2)\right\rbrace \\
&\leq \int_X \overline{u}d\rho_1 + \int_Y \overline{v}d\rho_2 - \ep\int_{X\times Y}\Psi\left(\frac{\overline{u}+\overline{v}-c}{\ep}\right)d(\rho_1\otimes\rho_2) = \Dep(\overline{u},\overline{v}).
\end{align*}

So, $(\overline{u},\overline{v})$ is a maximizer for $\Dep$. By construction, we have also that $\overline{u} \in L^{\infty}(\rho_1)$ and $ \overline{v} \in L^{\infty}(\rho_2)$. Finally, the strictly concavity of $D_{\ep}$ and Lemma \ref{lemma:dual} implies that the maximizer is unique and, in particular $\overline{v} = \overline{u}^{(c,\ep,\Phi)}$. 
\end{proof}

By using the existence of maximizer potentials $(u_0,v_0)$, one can obtain directly the duality between \eqref{eq:mainKL} and \eqref{eqn:dualdef} as well as a characterization of the primal problem \eqref{eq:mainKL}, see Appendix \ref{app:existmax} for the proof.

\begin{proposition}\label{prop:dual} 
Let $\ep>0$ be a positive number, $(X,d_X)$ and $(Y,d_Y)$ be complete separable metric spaces, $c:X\times Y\to \RR$ be a Borel cost function, $\rho_1 \in \mathcal{P}(X)$, $\rho_2 \in \mathcal{P}(Y)$ be probability measures. Then for every $\gamma \in \Pi(\rho_1,\rho_2)$, $u\in \Lg(\rho_1)$ and $v\in \Lg(\rho_2)$ we have 
\[
\Dep(u,v) \leq \OTep(\rho_1,\rho_2), \text{ with equality if and only if } \gamma = \Psi'((u+v-c)/\ep)\rho_1\otimes\rho_2.
\]
\end{proposition}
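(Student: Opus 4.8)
The plan is to prove the sharper, $\gamma$-dependent inequality
\begin{equation}\label{eq:sharpFY}
\Dep(u,v)\ \le\ \int_{X\times Y}c\,d\gamma+\ep\,G(\gamma\,|\,\rho_1\otimes\rho_2)
\qquad\text{for all }\gamma\in\Pi(\rho_1,\rho_2),\ u\in\Lg(\rho_1),\ v\in\Lg(\rho_2),
\end{equation}
with equality if and only if $\gamma=\Psi'\big((u+v-c)/\ep\big)\,\rho_1\otimes\rho_2$; since the left-hand side of \eqref{eq:sharpFY} does not depend on $\gamma$, taking the infimum over $\gamma$ yields $\Dep(u,v)\le\OTep(\rho_1,\rho_2)$, and the ``equality'' in the statement is the equality in \eqref{eq:sharpFY}. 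If $\gamma\not\ll\rho_1\otimes\rho_2$ then $G(\gamma\,|\,\rho_1\otimes\rho_2)=+\infty$ and both claims are immediate, so we may assume $\gamma=\alpha\cdot(\rho_1\otimes\rho_2)$ with $0\le\alpha\in L^1(\rho_1\otimes\rho_2)$, and also that both sides of \eqref{eq:sharpFY} are finite.

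The key tool is the Fenchel--Young (Legendre) inequality for the conjugate pair $(\Phi,\Psi)$, $\Psi=\Phi^*$: for every $z\in[0,+\infty)$ and $s\in\RR$ one has $\Phi(z)+\Psi(s)\ge z\,s$, with equality if and only if $s\in\partial\Phi(z)$; since $\Phi\in C^1((0,\infty))$ this means $s=\Phi'(z)$ when $z>0$, while for $z=0$ it forces $s\le\Phi'(0^+)$ — in both cases this is equivalent to $z=\Psi'(s)$ (recall $(\Phi')^{-1}=\Psi'$). Applying this pointwise with $z=\alpha(x,y)$ and $s=\big(u(x)+v(y)-c(x,y)\big)/\ep$, multiplying by $\ep$, and integrating against $\rho_1\otimes\rho_2$ gives
\[
\ep\!\int_{X\times Y}\!\!\Phi(\alpha)\,d(\rho_1\otimes\rho_2)\ +\ \ep\!\int_{X\times Y}\!\!\Psi\!\Big(\tfrac{u+v-c}{\ep}\Big)d(\rho_1\otimes\rho_2)\ \ge\ \int_{X\times Y}\!\!\alpha\,(u+v-c)\,d(\rho_1\otimes\rho_2).
\]
Because $\gamma$ has first marginal $\rho_1$ and second marginal $\rho_2$, the right-hand side equals $\int_X u\,d\rho_1+\int_Y v\,d\rho_2-\int_{X\times Y}c\,d\gamma$, and rearranging is precisely \eqref{eq:sharpFY}. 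Moreover equality in \eqref{eq:sharpFY} holds if and only if the pointwise Young inequality is an equality $(\rho_1\otimes\rho_2)$-a.e., i.e.\ $\alpha(x,y)=\Psi'\big((u(x)+v(y)-c(x,y))/\ep\big)$ a.e., i.e.\ $\gamma=\Psi'\big((u+v-c)/\ep\big)\,\rho_1\otimes\rho_2$.

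The step that needs care — and the main technical obstacle — is integrability: the two sides of the pointwise Young inequality need not be separately in $L^1(\rho_1\otimes\rho_2)$, so the passage to \eqref{eq:sharpFY} must be carried out splitting into positive and negative parts, using $\Phi\ge 0$, the elementary bound $\Psi(t)\ge t$ (Young at $z=1$, since $\Phi(1)=0$), and the defining finiteness $\int_X\Psi(u/\ep)\,d\rho_1<\infty$, $\int_Y\Psi(v/\ep)\,d\rho_2<\infty$ of the $\Lg$ spaces; whenever one of these terms is $+\infty$, \eqref{eq:sharpFY} is trivially true, which is why we reduced to the finite case, and the same bookkeeping justifies $\int_{X\times Y}\big(u(x)+v(y)\big)\,d\gamma=\int_X u\,d\rho_1+\int_Y v\,d\rho_2$. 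Finally, in the bounded-cost setting this also gives strong duality: by Lemma \ref{lemma:dual} the maximizer $(u_0,v_0)$ of Theorem \ref{thm:kanto2Nmax} satisfies $v_0=u_0^{(c,\ep,\Phi)}$ and $u_0=v_0^{(c,\ep,\Phi)}$, whose defining identities read $\int_X\Psi'\big((u_0+v_0-c)/\ep\big)\,d\rho_1=1$ $\rho_2$-a.e.\ and the symmetric one $\rho_1$-a.e.; together with $\Psi'\ge 0$ (the effective domain of $\Phi$ being contained in $[0,+\infty]$) this says $\gamma_0:=\Psi'\big((u_0+v_0-c)/\ep\big)\,\rho_1\otimes\rho_2\in\Pi(\rho_1,\rho_2)$, so the equality case of \eqref{eq:sharpFY} forces $\OTep(\rho_1,\rho_2)=\Dep(u_0,v_0)$ and identifies $\gamma_0$ as the (unique, by strict convexity of $\Psi$) optimal plan.
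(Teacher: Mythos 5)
Your proof is correct and follows the same Fenchel--Young route as the paper: reduce to $\gamma\ll\rho_1\otimes\rho_2$, apply $\ep\Phi(z)+\ep\Psi(s/\ep)\ge zs$ pointwise, integrate, use the marginal constraints, and read off the equality case. You are somewhat more explicit than the paper about the integrability bookkeeping and about the final passage to the infimum over $\gamma$, and you append a strong-duality remark that really belongs to Theorem~\ref{prop:equiv_comp}, but the core argument is identical.
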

Finally, we are able to the main main theorem of this section. The proof is in Appendix \ref{app:existmax}.

\begin{theorem}[Equivalence and complementarity condition]\label{prop:equiv_comp}
Let $\ep>0$ be a positive number, $(X,d_X)$ and $(Y,d_Y)$ be complete separable metric spaces, $c:X\times Y\to \mathbb{R}$ be a bounded cost function, $\Phi$ be an Entropy, $\Psi = \Phi^*$, $\rho_1 \in \mathcal{P}(X)$, $\rho_2 \in \mathcal{P}(Y)$ be probability measures. Then given $u^* \in \Lg(\rho_1) , v^* \in \Lg(\rho_2)$, the following are equivalent:
\begin{enumerate}
\item \emph{(Maximizers)} $u^*$ and $v^*$ are maximizing potentials for \eqref{eq:dualitySep};
\item \emph{(Maximality condition)} $\Fcep (u^*)=v^*$ and $\Fcep(v^*)=u^*$ (in particular $u^*\in L^{\infty}(\rho_1)$ and $v^*\in L^{\infty}(\rho_2)$);
\item \emph{(Complementary Slackness)} let $\gamma^*= \Psi'\left((u^*(x)+v^*(y)-c(x,y))/\ep\right) \cdot \rho_1 \otimes \rho_2$, then $\gamma^* \in \Pi(\rho_1, \rho_2)$;
\item \emph{(Duality)} $\OTep(\rho_1,\rho_2) = D_{\ep} (u^*,v^*)$.
\end{enumerate}
Moreover in those cases $\gamma^*$, as defined in 3, is also the (unique) minimizer for the problem \eqref{eq:mainKL}.
\end{theorem}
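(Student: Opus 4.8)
The plan is to prove the cycle of implications $1\Rightarrow 2\Rightarrow 3\Rightarrow 4\Rightarrow 1$, and then read off the ``moreover'' part, using only the lemmas of Section~\ref{sec:bounded}, Theorem~\ref{thm:kanto2Nmax}, and Proposition~\ref{prop:dual}. For $1\Rightarrow 2$: if $(u^*,v^*)$ maximizes \eqref{eq:dualitySep}, then since $\Fcep(u^*)\in L^\infty(\rho_2)\subset\Lg(\rho_2)$ by Lemma~\ref{lemma:F1F2}, Lemma~\ref{lemma:dual} gives $\Dep(u^*,\Fcep(u^*))\ge \Dep(u^*,v^*)$, while maximality of $(u^*,v^*)$ gives the reverse inequality; hence the two coincide and the ``iff'' clause of Lemma~\ref{lemma:dual} forces $v^*=\Fcep(u^*)$. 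The same argument with the two marginals interchanged yields $u^*=\Fcep(v^*)$, and the $L^\infty$ bounds are again part of Lemma~\ref{lemma:F1F2}.

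The step $2\Rightarrow 3$ is, I expect, the crux. I would unwind the definition of the transform exactly as in the proof of Lemma~\ref{lemma:F1F2}, where $\Fcep(u)(y)$ is characterised as the unique $t$ solving $\int_X\Psi'((u(x)+t-c(x,y))/\ep)\,d\rho_1(x)=1$ (the first–order condition for the strictly concave map $t\mapsto\alpha^y(t)$). Thus $v^*=\Fcep(u^*)$ says precisely that $\int_X\Psi'((u^*(x)+v^*(y)-c(x,y))/\ep)\,d\rho_1(x)=1$ for every $y$, i.e.\ the $Y$-marginal of $\gamma^*:=\Psi'((u^*+v^*-c)/\ep)\,\rho_1\otimes\rho_2$ equals $\rho_2$; symmetrically $u^*=\Fcep(v^*)$ gives that its $X$-marginal is $\rho_1$. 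Since $\Psi'=(\Phi^*)'$ takes values in the domain $[0,\infty)$ of $\Phi$, the density is nonnegative, and the marginal computation also shows it integrates to $1$; hence $\gamma^*\in\Pi(\rho_1,\rho_2)$. The delicate point here is measure-theoretic: $\Psi$ is convex but a priori only differentiable off a Lebesgue-null set, so one must check — by Fubini — that $(u^*+v^*-c)/\ep$ lands in a non-differentiability point of $\Psi$ only on a $\rho_1\otimes\rho_2$-negligible set, or equivalently fix $\Psi'$ as a measurable selection of $\partial\Psi$.

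For $3\Rightarrow 4$: since $\gamma^*\in\Pi(\rho_1,\rho_2)$ has the form $\Psi'((u^*+v^*-c)/\ep)\rho_1\otimes\rho_2$, the equality case of Proposition~\ref{prop:dual} gives $\Dep(u^*,v^*)=\int_{X\times Y}c\,d\gamma^*+\ep\,G(\gamma^*|\rho_1\otimes\rho_2)\ge\OTep(\rho_1,\rho_2)$, while its inequality part gives $\Dep(u^*,v^*)\le\OTep(\rho_1,\rho_2)$; hence equality. For $4\Rightarrow 1$: Proposition~\ref{prop:dual} bounds $\Dep(u,v)\le\OTep(\rho_1,\rho_2)$ for all admissible $(u,v)$, so $\Dep(u^*,v^*)=\OTep(\rho_1,\rho_2)$ forces $(u^*,v^*)$ to attain the supremum in \eqref{eq:dualitySep}; Theorem~\ref{thm:kanto2Nmax} guarantees such a couple exists, so the cycle is non-vacuous.

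Finally, the ``moreover'' part follows from the same equality characterisation: when the equivalent conditions hold, the equality case of Proposition~\ref{prop:dual} together with item~4 shows $\int_{X\times Y}c\,d\gamma^*+\ep\,G(\gamma^*|\rho_1\otimes\rho_2)=\Dep(u^*,v^*)=\OTep(\rho_1,\rho_2)$, so $\gamma^*$ is a minimizer of \eqref{eq:mainKL}; and if $\gamma$ is any minimizer, then $\int_{X\times Y}c\,d\gamma+\ep\,G(\gamma|\rho_1\otimes\rho_2)=\OTep(\rho_1,\rho_2)=\Dep(u^*,v^*)$, and the ``iff'' in Proposition~\ref{prop:dual} forces $\gamma=\Psi'((u^*+v^*-c)/\ep)\rho_1\otimes\rho_2=\gamma^*$, giving uniqueness. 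Besides the $\Psi'$ bookkeeping in step $2\Rightarrow 3$, the only other routine care is checking that all the transformed potentials one invokes indeed lie in $\Lg$, which is immediate from their boundedness and the normalisation $\Phi(1)=\Phi'(1)=0$.
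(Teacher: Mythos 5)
Your proposal is correct and follows essentially the same cycle $1\Rightarrow 2\Rightarrow 3\Rightarrow 4\Rightarrow 1$ as the paper, invoking Lemma~\ref{lemma:dual}, the first-order characterisation of the $(c,\ep,\Phi)$-transform from Lemma~\ref{lemma:F1F2}, and the equality case of Proposition~\ref{prop:dual} at the same junctures. Your additional remark on $\Psi'$ as a measurable selection of $\partial\Psi$ is a thoughtful point that the paper itself elides (it implicitly assumes $\Phi$ strictly convex so that $\Psi\in C^1$), and your explicit argument for uniqueness of the minimizer in the ``moreover'' part is slightly more detailed than the paper's, but the route is the same.
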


\subsection{Properties of $\OTep$ and $\Phi$-Sinkhorn divergences}\label{sec:divergences}

We start by showing the continuity of the functional $\OTep$ with respect to the marginals. The proofs of Proposition \ref{prop:OTcont} and Proposition \ref{prop:sinkdivlimits} below are given in the Appendix \ref{app:divergence}.

\begin{proposition}\label{prop:OTcont}
The functional $\OTep$ is continuous: if $(\rho^n_1)_{n\in\NN} \subset \mathcal{P}(X)$ and $(\rho^n_2)_{n\in\NN} \subset \mathcal{P}(Y)$ are sequences weakly converging respectively to $\rho_1$ and $\rho_2$, then the corresponding Kantorovich potentials $(u_n)_{n\in \NN}$ and $(v_n)_{n\in\NN}$ converges uniformly in $L^{\infty}$ to $(\overline{u},\overline{v})$.  Moreover, gradient of $\OTep(\rho_1,\rho_2)$ is given by 
\[
\nabla \OTep(\rho_1,\rho_2) = \left(\overline{u} - \int_{Y}\Psi\left(\frac{\overline{u}+\overline{v}-c}{\ep}\right)d\rho_2, \overline{v} - \int_{X}\Psi\left(\frac{\overline{u}+\overline{v}-c}{\ep}\right)d\rho_1  \right),
\]
where $(\overline{u},\overline{v})$ are Kantorovich potentials such that $\overline{u} = \vcep$ and $\overline{v} = \ucep$.
\end{proposition}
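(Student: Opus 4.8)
The plan is to split the statement into two halves: first, the stability/convergence of the Kantorovich potentials under weak convergence of the marginals, and second, the computation of the gradient of $\OTep$ viewed as a function of $(\rho_1,\rho_2)$. For the first half I would take maximizing potentials $(u_n,v_n)$ associated to $(\rho^n_1,\rho^n_2)$; by Theorem \ref{thm:kanto2Nmax} and Lemma \ref{lemma:F1F2} we may assume $\|u_n\|_\infty,\|v_n\|_\infty\le 2\|c\|_\infty$, and if $c$ is Lipschitz (or $\omega$-continuous) the same lemma gives an equi-Lipschitz (equi-continuous) bound on $u_n=v_n^{(c,\ep,\Phi)}$ and $v_n=u_n^{(c,\ep,\Phi)}$. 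Then Arzel\`a--Ascoli yields a uniformly convergent subsequence $u_{n_k}\to\overline u$, $v_{n_k}\to\overline v$. Here I should be a little careful: the statement as phrased asserts $L^\infty$ (uniform) convergence, which really needs the compactness coming from part (ii) of Lemma \ref{lemma:F1F2}; with only the oscillation bound from part (i) one gets weak-$*$ convergence, which is what was used in the proof of Theorem \ref{thm:kanto2Nmax}. I would state the hypotheses accordingly (bounded continuous cost, or Lipschitz cost for the uniform statement) and pass to the limit.

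The key step in the first half is identifying the limit $(\overline u,\overline v)$ as \emph{the} pair of Kantorovich potentials for $(\rho_1,\rho_2)$. For this I would show $\OTep(\rho^n_1,\rho^n_2)\to\OTep(\rho_1,\rho_2)$: lower semicontinuity of $\gamma\mapsto\int c\,d\gamma+\ep G(\gamma|\rho_1\otimes\rho_2)$ under joint weak convergence of plans and marginals gives $\liminf\OTep(\rho^n_1,\rho^n_2)\ge\OTep(\rho_1,\rho_2)$, while feeding a near-optimal plan for $(\rho_1,\rho_2)$ through a gluing/coupling construction (or simply using the fixed admissible potentials and the continuity of $\Dep(\cdot,\cdot)$ in the marginals) gives the reverse inequality. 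Combining with $\Dep(u_{n_k},v_{n_k})=\OTep(\rho^n_1,\rho^n_2)$ and the (dominated-convergence) continuity of the map $(u,v,\rho_1,\rho_2)\mapsto\Dep(u,v)$ — here the uniform bounds on $u_n,v_n$ and boundedness of $c$ make $\Psi((u_n+v_n-c)/\ep)$ uniformly bounded, so no integrability issue arises — we get $\Dep(\overline u,\overline v)=\OTep(\rho_1,\rho_2)$, i.e.\ $(\overline u,\overline v)$ is a maximizer. Uniqueness (Theorem \ref{thm:kanto2Nmax}) then shows the whole sequence converges, not just a subsequence, and that $\overline u=\overline v^{(c,\ep,\Phi)}$, $\overline v=\overline u^{(c,\ep,\Phi)}$.

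For the gradient formula I would use the envelope/Danskin principle. Write $\OTep(\rho_1,\rho_2)=\sup_{u,v}\Dep(u,v)$ where $\Dep(u,v)=\int u\,d\rho_1+\int v\,d\rho_2-\ep\int\Psi((u+v-c)/\ep)\,d(\rho_1\otimes\rho_2)$, and differentiate in the direction of a perturbation of $\rho_1$ (say): by the envelope theorem the derivative equals $\partial_{\rho_1}\Dep$ evaluated at the maximizer $(\overline u,\overline v)$, which is exactly $\overline u-\ep\int_Y\Psi((\overline u+\overline v-c)/\ep)\,d\rho_2$, and symmetrically in $\rho_2$. To make the envelope argument rigorous I would use the convergence established in the first half (the maximizer depends continuously on the marginals, so the standard argument — bounding the incremental quotient above by the value of $\Dep(\overline u,\overline v)$ at the perturbed marginal and below by $\Dep(\overline u_t,\overline v_t)$ at the unperturbed marginal, then letting $t\to0$ and using continuity of both the potentials and the integrands — goes through). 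One subtlety: "gradient'' here is with respect to the densities $\rho_1,\rho_2$ against a reference measure, and is defined only up to the additive constant matching the constraint $\int\rho_i=1$, which is consistent with the $(u,v)\mapsto(u+a,v-a)$ ambiguity; I would note this.

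The main obstacle I expect is not the gradient computation (which is a fairly mechanical envelope argument once continuity of the maximizer is in hand) but precisely establishing the \emph{uniform} $L^\infty$-convergence of the potentials: this genuinely requires the equicontinuity from Lemma \ref{lemma:F1F2}(ii), hence a regularity assumption on $c$ beyond mere boundedness, and it also requires ruling out loss of mass / lower-semicontinuity defects when passing to the limit in $G(\gamma|\rho^n_1\otimes\rho^n_2)$ — i.e.\ proving the continuity (not just semicontinuity) of $\OTep$ in the marginals, which is the real content needed to pin down the limit as the maximizer for the limiting problem.
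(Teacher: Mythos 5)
Your proposal follows essentially the same two-part strategy as the paper: extract a convergent subsequence of optimal potentials using uniform $L^\infty$-bounds, identify the limit as the maximizer by a lower-semicontinuity argument as in Theorem~\ref{thm:kanto2Nmax}, and then derive the gradient formula by a Danskin/envelope sandwich that uses the perturbed and unperturbed optimal potentials as competitors and lets $t\to0$ via the just-established convergence. The labelling of which competitor gives the upper and which the lower bound is swapped relative to the paper, but the mechanism is identical.

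One thing worth highlighting: you correctly flag that the paper's own proof of the first half is dubious as written --- it invokes Banach--Alaoglu on the uniformly bounded potentials and then asserts \emph{uniform} convergence of $u_{n_k},v_{n_k}$, whereas Banach--Alaoglu only yields weak-$*$ $L^\infty$ convergence. As you observe, genuine $L^\infty$ (or even strong $L^p$) convergence requires additional compactness: either the $\omega$-continuity of $c$ via Lemma~\ref{lemma:F1F2}(ii) and Arzel\`a--Ascoli, or the compactness of $\Fcep:L^\infty\to L^p$ from Proposition~\ref{prop:EstPot} (which gives strong $L^p$ but still not $L^\infty$). So your added care here is warranted; if one wants the stated $L^\infty$ convergence the hypotheses on $c$ should be strengthened beyond boundedness, and your proposed adjustment is the right fix. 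Your remark about the gradient being defined modulo the $(u,v)\mapsto(u+a,v-a)$ ambiguity and the mass constraint is also a real subtlety the paper leaves implicit.
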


In analogy to the Sinkhorn divergences \cite{FeyFXVAmaPey, GenCutPeyBac16}, we can define a new class of Statistical Divergences based in \eqref{intro:mainKL}: $\OTepd(\rho_1,\rho_2) = \OTep(\rho_1,\rho_2) - \frac{1}{2}\left(\OTep(\rho_1,\rho_1) + \OTep(\rho_2,\rho_2)\right)$. It would interesting to prove that $\OTepd \geq 0$; in the Shannon entropy case \cite{FeyFXVAmaPey} the authors prove that under an assumption of positivity (in the sense of kernels) of $e^{-c /\ep}$, the Sinkhorn divergence is in fact positive. However the method used does not generalize to different types of entropies, and so this remains an open question.

As for the Shannon entropy, the $\Phi$-Sinkhorn divergence interpolates between the Optimal Transport distance and the Maximum Mean Discrepancy with kernel $c$.

\begin{proposition}\label{prop:sinkdivlimits}
The $\Phi$-Sinkhorn divergence has the following limits
\begin{itemize}
\item[(a)] $\OTepd(\rho_1,\rho_2) \to \OT_0(\rho_1,\rho_2)$, when $\ep \to 0^+$.
\item[(b)] $\OTepd(\rho_1,\rho_2) \to \frac 12 \|\rho_1-\rho_2\|_{-c} := -\frac 12 \int_{X\times Y} c\, d((\rho_1-\rho_2)\otimes(\rho_1-\rho_2))$, when $\ep \to +\infty$.
\end{itemize}
Moreover, any sequence $\gammaep$ solving \eqref{eq:mainKL} weakly converges to a solution of the Optimal Transport problem (i.e, $\ep=0$ in \eqref{eq:mainKL}) when $\ep\to0$ and $\gammaep\to \rho_1\otimes\rho_2$ when $\ep\to+\infty$.
 \end{proposition}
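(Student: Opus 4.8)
## Proof proposal for Proposition \ref{prop:sinkdivlimits}

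The plan is to prove the statements about $\gammaep$ first, since the limits of $\OTepd$ will follow by combining $\Gamma$-convergence-type arguments with the structure of $\OTepd$. For part (a), I would show that $\OTep(\rho_1,\rho_2)\to\OT_0(\rho_1,\rho_2)$ as $\ep\to0^+$; since $\OTepd(\rho_i,\rho_i)\to 0$ (because $\OT_0(\rho_i,\rho_i)=0$ and the penalization term is controlled, using that $\gamma=\rho_i\otimes\rho_i$ is admissible and gives $G=0$ when $\rho_i\otimes\rho_i$ is its own reference, so actually $\OTep(\rho_i,\rho_i)\le \int c\,d(\rho_i\otimes\rho_i)$, with a matching lower bound from $\OT_0\le\OTep$), the claim for $\OTepd$ reduces to the convergence of the first term. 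The upper bound $\limsup_{\ep\to0}\OTep(\rho_1,\rho_2)\le\OT_0(\rho_1,\rho_2)$: take an optimal plan $\gamma_0$ for $\OT_0$, regularize it by convolution/mixing with $\rho_1\otimes\rho_2$ to get $\gamma_\delta=(1-\delta)\gamma_0+\delta\,\rho_1\otimes\rho_2$ which has bounded $\Phi$-entropy (here superlinearity of $\Phi$ is not needed, only that $\gamma_\delta\ll\rho_1\otimes\rho_2$ with density bounded below by $\delta$ and the entropy is finite — one may need $\Phi$ to be finite on a neighborhood of the relevant density values, which holds since $\Phi:[0,\infty]\to[0,\infty]$ and is $C^1$ on $(0,\infty)$), use it as a competitor and send $\ep\to0$ then $\delta\to0$. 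The lower bound is trivial: $\Phi\ge0$ forces $\OTep\ge\OT_0$.

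For the weak convergence of $\gammaep$ as $\ep\to0$: the family $\Pi(\rho_1,\rho_2)$ is tight (marginals fixed), so extract a weakly convergent subsequence $\gammaep\rightharpoonup\gamma_*$. Lower semicontinuity of $\gamma\mapsto\int c\,d\gamma$ (as $c$ is bounded and, if merely Borel, one argues with $c$ lower semicontinuous or bounded continuous — the excerpt takes $c$ bounded, so I would additionally invoke continuity or approximate) together with $\ep G\ge0$ gives $\int c\,d\gamma_*\le\liminf(\int c\,d\gammaep+\ep G(\gammaep|\rho_1\otimes\rho_2))=\OT_0(\rho_1,\rho_2)$ using part (a); since $\gamma_*\in\Pi(\rho_1,\rho_2)$ this forces $\gamma_*$ to be $\OT_0$-optimal. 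A standard subsequence argument upgrades this to convergence of the whole net to the (set of) optimal plans.

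For part (b), divide by $\ep$: $\frac1\ep\OTep(\rho_1,\rho_2)\to\inf_{\gamma\in\Pi}G(\gamma|\rho_1\otimes\rho_2)=0$, attained only at $\gamma=\rho_1\otimes\rho_2$ by strict convexity of $\Phi$ and $\Phi(1)=0$; this gives the weak convergence $\gammaep\to\rho_1\otimes\rho_2$ after noting $G$ has compact sublevels in the narrow topology (superlinearity of $\Phi$ — this is where superlinearity is essential). The delicate part is the finer asymptotics of $\OTepd$ itself. Writing $\gammaep=\alpha_\ep\,\rho_1\otimes\rho_2$ with $\alpha_\ep\to1$, I would Taylor-expand: since $\Phi(1)=\Phi'(1)=0$ and $\Phi\in C^1$ with (formally) $\Phi''(1)>0$, one has $\Phi(\alpha)\approx\frac{\Phi''(1)}2(\alpha-1)^2$ near $1$; but rather than pushing this expansion on the primal side (which needs uniform control on $\alpha_\ep-1$ and second-order regularity of $\Phi$ not assumed), I would work on the dual side. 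From Theorem \ref{thm:kanto2Nmax} there are bounded potentials $(u_\ep,v_\ep)$ with $\Vert u_\ep\Vert_\infty,\Vert v_\ep\Vert_\infty\le2\Vert c\Vert_\infty$; rescaling, the maximality condition $1=\int\Psi'((u_\ep+v_\ep-c)/\ep)\,d\rho_1$ together with $\Psi'(0)=1$, $\Psi\in C^1$ forces $(u_\ep+v_\ep-c)/\ep\to 0$ in a suitable averaged sense and $u_\ep/\ep,v_\ep/\ep$ to converge to the Kantorovich-type potentials of the degenerate problem; plugging into $\Dep$ and using $\Psi(s)=s+\frac{s^2}{2\Phi''(1)}+o(s^2)$ (since $\Psi=\Phi^*$ and $\Psi''(0)=1/\Phi''(1)$), the linear terms combine with the marginal terms to produce exactly $-\frac12\int c\,d((\rho_1-\rho_2)\otimes(\rho_1-\rho_2))$ after the subtraction defining $\OTepd$ cancels the $\rho_i\otimes\rho_i$ contributions and the quadratic remainder. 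The main obstacle I anticipate is making this dual-side expansion rigorous \emph{without} assuming $\Phi\in C^2$: one must extract the correct second-order behavior of $\Psi$ at $0$ from $\Phi\in C^1$ and strict convexity alone, which may require either strengthening the hypotheses (assume $\Phi$ is $C^2$ near $1$ with $\Phi''(1)>0$, harmless for all the quoted examples) or a more careful convex-analytic argument controlling $\Psi(s)-s$ from above and below by comparable quadratics; I would opt for the mild extra assumption and flag it, since the excerpt's stated hypotheses ($C^1$, $\Phi(1)=\Phi'(1)=0$, superlinear) are exactly what is needed for parts (a) and the weak-convergence statements but are borderline for the precise constant in (b).
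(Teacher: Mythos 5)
Your proposal diverges from the paper's proof on both halves, and on each there is a real problem.

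For (a), the upper-bound step has a gap: the mixture $\gamma_\delta=(1-\delta)\gamma_0+\delta\,\rho_1\otimes\rho_2$ is in general \emph{not} absolutely continuous with respect to $\rho_1\otimes\rho_2$, because an unregularized optimal plan $\gamma_0$ is typically singular (e.g.\ concentrated on the graph of a Monge map). In that case the density you invoke does not exist, $G(\gamma_\delta\mid\rho_1\otimes\rho_2)=+\infty$ by the paper's very definition of $G$, and $\gamma_\delta$ is not an admissible competitor. Your remark that superlinearity is not needed here misidentifies the obstruction: the issue is absolute continuity, which a convex combination with $\rho_1\otimes\rho_2$ cannot repair. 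This is precisely why the paper instead invokes the block approximation of Carlier--Duval--Peyr\'e--Schmitzer, which replaces $\gamma_0$ by a nearby coupling that \emph{is} absolutely continuous with bounded density, so the $\Phi$-entropy is finite.

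For (b), you have substantially overestimated the difficulty, and as a result talked yourself into a second-order dual expansion and a $C^2$ hypothesis on $\Phi$ that the paper does not need. The paper's proof is an elementary $\Gamma$-convergence argument applied to $\OTep$, not to $\OTepd$: one shows $\OTep(\mu,\nu)\to\int c\,d(\mu\otimes\nu)$ as $\ep\to+\infty$ (upper bound from the competitor $\mu\otimes\nu$, which has zero entropy; lower bound from $\ep G\geq 0$ together with $\gammaep\rightharpoonup\mu\otimes\nu$, which follows from boundedness of $\ep G(\gammaep)$, superlinearity and lower semicontinuity). The limit of $\OTepd$ is then just the algebraic combination of these three finite limits,
\[
\int c\,d(\rho_1\otimes\rho_2)-\tfrac12\int c\,d(\rho_1\otimes\rho_1)-\tfrac12\int c\,d(\rho_2\otimes\rho_2)
=-\tfrac12\int c\,d\bigl((\rho_1-\rho_2)\otimes(\rho_1-\rho_2)\bigr),
\]
valid for symmetric $c$. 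No Taylor expansion of $\Psi$ at the origin, no $\Phi''(1)$, and no asymptotics of $u_\ep/\ep$, $v_\ep/\ep$ are required: the three terms in $\OTepd$ converge separately and the MMD appears by expanding the tensor square, not by cancellation of second-order remainders. Your plan would only be needed to obtain a \emph{rate} or a next-order correction, not the limit itself. Your weak-convergence claims for $\gammaep$ in both regimes (tightness of $\Pi(\rho_1,\rho_2)$, lower semicontinuity of $G$, superlinearity to identify $\rho_1\otimes\rho_2$ as the only zero of $G$) are sound and consistent with the paper's approach.
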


\section{Computational Scheme: generalized Sinkhorn algorithm }\label{sec:sinkhorn}

In this section, we introduce the generalized \textit{Iterative Proportional Fitting Procedure} (IPFP) or \textit{Sinkhorn algorithm} \cite{Sin64} to compute the problem \eqref{eq:mainKL}. Our method can be extend to prove convergence to the corresponding Sinkhorn algorithm also in the multi-marginal case. The setting is presented in the Appendix \ref{app:multi}.

The main idea of the Sinkhorn algorithm is to construct the measure $\gammaep \in \Pi(\rho_1,\rho_2)$ realizing minimum in \eqref{eq:mainKL} by fixing the shape of an ansatz as $\gammaep_{n} = \Psi'\left((u^{n}(x)+v^{n}(y)-c(x,y)) /\ep\right) \rho_1 \otimes \rho_2$ (since this is the actual shape of the minimizer due to Theorem \ref{prop:equiv_comp}) and then alternatively updating either $u^n$ or $v^n$, by matching one of the marginal distribution respectively to the target marginals $\rho_1$ or $\rho_2$. Fixing for example $u^{n}$, this amounts to find $v^n$ such that 
\begin{equation}\label{eqn:IPFPsing}\int_X \Psi'\Bigl(\frac{u^n(x)+v^{n}(y)-c(x,y)}{\ep}\Bigr) \, d \rho_1(x) =1.
\end{equation}
This is an implicit definition for $u^n$ and, depending on the shape of $\Psi$, one can hope to solve the equation for $v^n(y)$ explicitely, but in general this is not possible.

We recall however from the proof of Lemma \ref{lemma:F1F2}, that solving Equation \eqref{eqn:IPFPsing} actually amounts to solve the maximization of the strictly concave function $\alpha^y$. This is useful in practice since we can use a (trivially parallelizable) Newton algorithm to find $v^n(y)$, but we can also observe that we have $v^n = (u^n)^{(c,\ep,\Phi)}$. In particular, as in the case of the \emph{classical} Sinkhorn algorithm, also this generalized IPFP can be seen as an alternate maximization procedure.

In other words, the generalized IPFP sequences $(u^n)_{n\in\NN}$ and $(v^n)_{n\in\NN}$ can defined by $v^0(y) = 1$ and
\begin{equation}\label{eq:IPFPiteration}
\begin{array}{lcl}
u^n(x) & = & \argmmax_{u\in\Lg(\rho_1)}\left\lbrace \int_{X}u d\rho_1 -\ep\int_{X\times Y}\Psi\left(\frac{u+v^{n-1}-c}{\ep}\right)d(\rho_1\otimes\rho_2) \right\rbrace, \\
v^n(y) & = & \argmmax_{v\in\Lg(\rho_2)}\left\lbrace \int_{X}vd\rho_2 -\ep\int_{X\times Y}\Psi\left(\frac{u^{n}+v-c}{\ep}\right)d(\rho_1\otimes\rho_2) \right\rbrace. \\
\end{array}
\end{equation}

\paragraph{Example (Entropy-regularized Sinkhorn algorithm):} When $\Phi(z)=z \ln (z)-z$ is the relative entropy, we have $\Psi'(t)=e^t$ and so Equation \eqref{eqn:IPFPsing} can be solved explicitly: we have
\[
\begin{array}{lcl}\label{eq:IPFPentcep}
 v^n(y) = & - \ep\log\left(\int_X e^{(c(x,y)-u^{(n-1)}(x))/\ep}d\rho_1\right) = (u^{(n-1)})^{(c,\ep,\Phi)},  \\
 u^n(x) =&-\ep\log\left(\int_Y e^{(c(x,y)-v^{(n-1)}(x))/\ep}d\rho_2\right) = (v^n)^{(c,\ep,\Phi)}.
\end{array}
\]
Via the new variables $a_n := \exp(u_n/\ep)$ and $b_n ;= \exp(v^n/\ep)$ one can then rewrite the system in the (usual) form: $a^0(x) = 1$,
\begin{equation}\label{eq:IPFPent}
 b^n(y)  =  \dfrac{1}{\int e^{-c(x,y)/\ep} a^{n-1}(x)d\rho_1(x)} \quad \text{and} \quad a^n(y)  =  \dfrac{1}{\int e^{-c(x,y)/\ep} b^{n}(y)d\rho_2(y)}. 
\end{equation}

\paragraph{Theoretical guarantees of convergence:} The original approach of showing converging of the usual Entropic Sinkhorn-algorithm, due to Franklin and Lorenz \cite{FraLor89} (see also \cite{CheGeoPav, RusIPFP} for the continuous case), are based on a fixed-point contraction theorem of the iterates \eqref{eq:IPFPent} under the Hilbert metric. Such approach can not be easily applied in our case since the regularization $\Phi$ is not necessarily multiplicative as, for instance, the Shannon entropy is. Hence, the solution optimal solution $\gammaep$ in \eqref{eq:mainKL} generally can not be decomposed in $\gammaep(x,y) = a(x)b(y)e^{-c/\ep}\cdot\rho_1\rho_2$.

The main strategy of our proof instead are based on ideas from optimal transport theory: we prove a priori estimates and deduce compactness results for the $(c,\ep,\Phi)$-transforms, which can guarantee that the Sinkhorn iteratations \eqref{eq:IPFPiteration}, seen as an alternate maximization in the dual problem, are strongly converging. This is also consistent with algorithms proposed to solve the quadratic-regularized \cite{LorMah19} and Shannon entropy cases \cite{ChiPeySchVia16,KarlRin17}. The proof of the next theorem is given in the Appendix \ref{sec:convergenceIPFP}.

\begin{theorem}\label{thm:convIPFP}
Let $(X,d_X)$ and $(Y,d_Y)$ be complete separable metric spaces, $c:X\times Y\to \RR$ be a Borel bounded cost, $\Phi$ be a entropy function, $\Psi = \Phi^*$, $\rho_1 \in \mathcal{P}(X)$ and $\rho_2\in\mathcal{P}(Y)$ be probability measures. If $(u^n)_{n\in\NN}$ and $(v^n)_{n\in\NN}$ are the generalized IPFP sequences defined in \eqref{eq:IPFPiteration}, then there exists a sequence of positive real numbers $(\lambda^n)_{n \in \NN}$ such that
\[
u^n+\lambda^n\to u  \text{ in } L^{p}(\rho_1) \quad \text{ and } \quad v^n-\lambda^n \to v  \text{ in } L^{p}(\rho_2), \, \quad 1\leq p < \infty.
\] 
where $(u,v)$ solves the dual problem \eqref{eq:dualitySep}. In particular, the sequence of couplings $\gammaep_{n}$ defined as $\gammaep_{n} = \Psi'\left((u^{n}(x)+v^{n}(y)-c(x,y))/\ep\right)$, converges in $L^{p}(\rho_1\otimes\rho_2)$ to $\gammaep_{opt}$, the density of the minimizer of \eqref{eq:mainKL} with respect to $\rho_1 \otimes \rho_2$. 
\end{theorem}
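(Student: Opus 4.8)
\textbf{Proof proposal for Theorem \ref{thm:convIPFP}.}

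The plan is to run the classical alternate-maximization argument for the dual functional, but to upgrade the convergence from ``value convergence'' to ``strong convergence of the potentials'' using the a priori estimates from Lemma \ref{lemma:F1F2} and the better-potential construction of Lemma \ref{lemma:betterpotentials}. First I would record monotonicity: by Lemma \ref{lemma:dual} each half-step of \eqref{eq:IPFPiteration} does not decrease $\Dep$, so $\Dep(u^n,v^n)$ is nondecreasing and, being bounded above by $\OTep(\rho_1,\rho_2)$ (Proposition \ref{prop:dual}), converges to some value $D_\infty$. Since the functional is invariant under $(u,v)\mapsto(u+a,v-a)$, I would normalize the iterates: choose $\lambda^n$ so that, say, $\int_X (u^n+\lambda^n)\,d\rho_1$ is fixed, and work with $\tilde u^n:=u^n+\lambda^n$, $\tilde v^n:=v^n-\lambda^n$. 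Because after the first iteration each $u^n$ is a $(c,\ep,\Phi)$-transform of $v^{n-1}$ and each $v^n$ a $(c,\ep,\Phi)$-transform of $u^n$, Lemma \ref{lemma:F1F2}(i) gives uniform $L^\infty$ bounds and oscillation bounds $\mathrm{osc}(u^n),\mathrm{osc}(v^n)\le 2\|c\|_\infty$ on the (un-normalized) iterates, hence after normalization $\|\tilde u^n\|_\infty,\|\tilde v^n\|_\infty\le C(\|c\|_\infty)$ uniformly in $n$.

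Next I would extract weak-$*$ limits. By Banach--Alaoglu, along a subsequence $\tilde u^{n_k}\rightharpoonup u$ in $L^\infty(\rho_1)$ and $\tilde v^{n_k}\rightharpoonup v$ in $L^\infty(\rho_2)$. By weak lower semicontinuity of the convex term $\int\Psi((\cdot+\cdot-c)/\ep)$ — the same computation as in the proof of Theorem \ref{thm:kanto2Nmax} — one gets $\Dep(u,v)\ge \limsup_k \Dep(\tilde u^{n_k},\tilde v^{n_k}) = D_\infty$, and also $\Dep(u,v)\le \sup \Dep = \OTep(\rho_1,\rho_2)$. To see that $(u,v)$ is actually a maximizer I would use the alternate-maximization structure: from $\Dep(u^{n_k},v^{n_k})\to D_\infty$ and $\Dep(u^{n_k+1},v^{n_k})\to D_\infty$ and $\Dep(u^{n_k+1},v^{n_k+1})\to D_\infty$, together with the strict concavity of $\Dep$ and the strict inequality in Lemma \ref{lemma:dual}, one deduces that the limit must satisfy the maximality condition $\Fcep(u)=v$, $\Fcep(v)=u$; by Theorem \ref{prop:equiv_comp} this means $(u,v)$ solves \eqref{eq:dualitySep}, and then $D_\infty=\OTep(\rho_1,\rho_2)$. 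By the uniqueness part of Theorem \ref{thm:kanto2Nmax}, the limit is independent of the subsequence, so the whole normalized sequence converges weak-$*$.

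Then comes the upgrade to strong convergence, which I expect to be the main obstacle. The idea is that strict concavity of $\Psi$ turns convergence of the dual value into an ``energy'' control on the densities. Writing $s^n := (\tilde u^n(x)+\tilde v^n(y)-c(x,y))/\ep$ and $s := (u(x)+v(y)-c(x,y))/\ep$, the gap $\Dep(u,v)-\Dep(\tilde u^n,\tilde v^n)$ bounds, via a Bregman-divergence-type identity for the strictly convex $\Psi$ (using $\Psi'(s)\cdot\rho_1\otimes\rho_2$ being a coupling with the right marginals at the limit, from Theorem \ref{prop:equiv_comp}(3)), an integral of the form $\int_{X\times Y} D_\Psi(s^n\,\|\,s)\,d(\rho_1\otimes\rho_2)\to 0$, where $D_\Psi(a\|b)=\Psi(a)-\Psi(b)-\Psi'(b)(a-b)\ge 0$. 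Since $s^n,s$ range in a fixed compact interval (by the uniform $L^\infty$ bounds) on which $\Psi''$ is bounded below by a positive constant, $D_\Psi(a\|b)\ge c_0|a-b|^2$ there, so $s^n\to s$ in $L^2(\rho_1\otimes\rho_2)$; integrating out one variable and using the constant-normalization gives $\tilde u^n\to u$ and $\tilde v^n\to v$ in $L^2$, hence in every $L^p$, $1\le p<\infty$, by the uniform $L^\infty$ bound and interpolation. Finally, $\gammaep_n = \Psi'(s^n)$ and $\Psi'$ is Lipschitz on the relevant compact interval, so $\gammaep_n\to\Psi'(s)=\gammaep_{opt}$ in $L^p(\rho_1\otimes\rho_2)$, where $\gammaep_{opt}$ is the density of the minimizer of \eqref{eq:mainKL} by Theorem \ref{prop:equiv_comp}. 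The delicate points to check carefully are: that the pointwise definitions of the transforms make $s^n$ well-defined and measurable, that the ``extra entropy'' term is finite so that the Bregman identity is legitimate (here the superlinearity of $\Phi$ and boundedness of $c$ are used), and that the normalization constants $\lambda^n$ indeed stay bounded — which follows from the uniform $L^\infty$ bounds above.
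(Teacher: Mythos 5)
Your overall skeleton (monotonicity of $\Dep$ along the iterates, normalization to kill the $(u,v)\mapsto(u+a,v-a)$ gauge, extracting a limit, identifying it as the maximizer via Theorem \ref{prop:equiv_comp}, transferring to the plans by Lipschitz continuity of $\Psi'$) matches the paper's strategy, but your key technical step — the upgrade from weak to strong convergence via a Bregman estimate — has a genuine gap.

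You assert that ``$s^n,s$ range in a fixed compact interval on which $\Psi''$ is bounded below by a positive constant, so $D_\Psi(a\|b)\ge c_0|a-b|^2$''. This is false for the general entropies $\Phi$ allowed by the theorem, and in particular for the paper's running example of quadratic regularization: there $\Psi(y)=\tfrac12\bigl(|y+1|_+^2-1\bigr)$, which is \emph{constant} on $(-\infty,-1]$, so $\Psi''\equiv 0$ on that half-line. Since the whole point of quadratic (and Tsallis) regularization is to produce sparse plans, the optimal $s=(u+v-c)/\ep$ genuinely takes values below $-1$ on sets of positive measure, so you cannot avoid the degenerate region by shrinking the interval. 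Consequently $\int D_\Psi(s^n\|s)\to 0$ does not give $s^n\to s$ in $L^2$: on the flat part of $\Psi$ the Bregman divergence carries no information about $|s^n-s|$. (It does control $\Psi'(s^n)-\Psi'(s)$ via the conjugate Bregman divergence when $\Phi''$ is bounded below, e.g.\ quadratic, but that is not enough to recover convergence of $u^n,v^n$ themselves, which is what the theorem claims.) The paper's proof circumvents this by establishing Proposition \ref{prop:EstPot}, a compactness result for the operator $\Fcep:L^\infty\to L^p$, whose proof works precisely on the set where $\Psi'$ stays above a threshold $1-\eta/2$ (so that strict monotonicity of $\Psi'$ is available) and controls the complement in measure via Lusin's theorem. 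Compactness then yields $L^p$-convergent subsequences without any strong-convexity assumption on $\Psi$.

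A secondary, related issue: to identify the weak-$*$ limit as a maximizer you invoke passing to the limit in $\Dep(u^{n_k+1},v^{n_k})\to D_\infty$ etc.\ and strict concavity, but the $(c,\ep,\Phi)$-transform is not weak-$*$ continuous, so $v^{n_k}\rightharpoonup v$ does not give $\Fcep(v^{n_k})\rightharpoonup\Fcep(v)$ and the identification $v=\ucep$ does not follow from weak convergence alone. The paper again relies on the strong $L^p$ convergence coming from the compact operator, together with the $1$-Lipschitz $L^\infty\to L^p$ continuity of $\Fcep$ proved in the same Proposition \ref{prop:EstPot}, to make this limit passage rigorous. So the compactness of the transform is doing double duty — it both upgrades to strong convergence and makes the fixed-point identification legitimate — and it is the piece your argument is missing.
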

\small
\section*{Acknowledgments}

S. DM. is member of ``Gruppo Nazionale per l’Analisi Matematica, la Probabilit\`a e le loro Applicazioni" (GNAMPA). A.G. acknowledges funding by the European Research Council under H2020/MSCA-IF “OTmeetsDFT” [grant ID: 795942].


\small
\bibliographystyle{plain}
\bibliography{refs}

\cleardoublepage
\setcounter{equation}{0}
\setcounter{figure}{0}
\setcounter{table}{0}
\setcounter{page}{1}

\makeatletter

%

%
%
%
%
%
 \hsize\textwidth
    \linewidth\hsize
    \vskip -0.6in
    \@toptitlebar
    \begin{center}
    {\LARGE\bf Suplementary material: Optimal Transport losses with general convex regularization\par}
    \end{center}
    \@bottomtitlebar

\appendix
\addcontentsline{toc}{section}{Appendices}
\renewcommand{\thesubsection}{\Alph{subsection}}

\section{Section \ref{sec:bounded}: A priori estimates and $(c,\ep,\Phi)$-transforms}\label{app:apriori}

\subsection*{Proof of Lemma \ref{lemma:F1F2} $(i)$}\label{app:F1F2}
\begin{proof}
Let $u\in\Lg(\rho_1)$, $v\in\Lg(\rho_2)$ and $\Psi = \Phi^*$. By definition, we have that  $\ucep(y)$ can be defined pointwisely by
\[
\ucep (y) :=\argmmax_{t\in\RR} \, \alpha^y(t)  \qquad \text{ where } \qquad \alpha^y(t) := t  - \ep\int_{X}\Psi((u+t-c)/\ep).
\]
Notice that $\alpha^y(t)$ is concave and  $(\alpha^y)'(t_0) = 0$, if and only if, 
\[
1  = \int_{X}\Psi'((u(x)+t_0-c(x,y))/\ep)d\rho_1(x) =:  \beta^{y}(t_0). 
\]
Therefore, $\ucep(y)$ solves
\begin{equation}\label{eqn:condition}
1 = \int_{X}\Psi'((u(x)+\ucep(y)-c(x,y))/\ep)d\rho_1(x), \quad \forall~ y\in Y.
\end{equation}
Due to the boundedness of $c$, given $y',y\in Y$, $\vert c(x,y') - c(x,y) \vert \leq 2\Vert c\Vert_{\infty}$. In particular, $c(x,y')\geq c(x,y) - 2\Vert c\Vert_{\infty}$. and since $\Psi'$ is increasing, one has
\[
1  = \int_{X}\Psi'((u+\ucep(y')-c(x,y'))/\ep)d\rho_1 \leq \int_{X}\Psi'((u+\ucep(y')-c(x,y) + 2\Vert c\Vert_{\infty}))/\ep)d\rho_1.
\]
But one has also that $\beta^{y}(\ucep (y)) = 1$ and using again the monotonicity of $\beta^y(t)$, we can get that $\ucep(y') + 2\Vert c\Vert_{\infty} \geq \ucep(y)$. By exchanging $y,y'$, we then conclude $\vert \ucep(y) - \ucep(y')\vert \leq 2\Vert c\Vert_{\infty}$. Analogously, we can show the boundedness of $\vcep$.

Since $\Psi'(0)=1$, or, equivalently $\Phi'(1)=0$, that is the minimum of $\Phi$ is attained at $1$, then from \eqref{eqn:condition} one could get also
\begin{equation}\label{eqn:comparison}
\ucep(y) - \|c\|_{\infty}+ \inf u \leq 0 \qquad \ucep(y)+ \|c\|_{\infty} + \sup u \geq 0
\end{equation}

Hence, ${\rm osc}(\ucep) \leq \Vert c\Vert_{\infty}$. Similarly, we also have ${\rm osc}(\vcep) \leq 2\Vert c\Vert_{\infty}$.
\end{proof}

\subsection*{Proof of Lemma \ref{lemma:dual}}\label{app:lemmadual}

\begin{proof} 
Let $u\in\Lg(\rho_1)$ and $v\in\Lg(\rho_2)$, then
\begin{align}
\Dep(u,v) &= \int_X ud\rho_1 + \int_Y vd\rho_2 - \ep\int_{X\times Y} \Psi\left(\frac{u+v-c}{\ep}\right)d(\rho_1\otimes\rho_2),\\
&\leq  \int_X ud\rho_1 +  \max_{v\in\Lg(\rho_2)}\int_{X\times Y} v(y) - \ep \Psi\left(\frac{u+v-c}{\ep}\right) d(\rho_1\otimes\rho_2), \\
&= \int_X ud\rho_1 + \int_{X\times Y} \ucep - \ep \Psi\left(\frac{u+\ucep-c}{\ep}\right) d(\rho_1\otimes\rho_2).
\end{align}  
Therefore, for any $u\in \Lg(\rho_1)$, $ \Dep(u,v) \leq \Dep(u,\ucep)$, since the function $\alpha^y(t)= t-\ep\int_{X}\Psi((u+t-c)/\ep)d\rho_1$ is strictly concave and attains its maximum in $u^{(c,\ep,\Phi)}$. In particular, $D_{\ep}(u,\ucep)= D_{\ep}(u,v)$ if and only if $v = \ucep$.
\end{proof}

\subsection*{Proof of Lemma \ref{lemma:betterpotentials}}\label{app:betterpotentials}

\begin{proof}

Let $u \in \Lg(\rho_1)$ and $v \in \Lg(\rho_2)$. Without loss of generality we can assume $u(x_0),v(y_0)<+\infty$ and set $\tilde{u} = \vcep$. By the definition of $(c,\ep,\Phi)$-transform, we have
\[
\Dep(\tilde{u},v)\geq \Dep(u,v).
\]
 We know by lemma \eqref{lemma:F1F2} that ${\rm osc}(\tilde{u}) \leq 2 \| c \|_{\infty}$ and in particular there exists a constant $a$ such that $\|\tilde{u}-a\|_{\infty} \leq \|c\|_{\infty}$. We define $u^*=\tilde{u}-a$ and we notice that in fact $u^*= (v+a)^{(c,\ep,\Phi)}$. In order to conclude it is sufficient to notice that using \eqref{eqn:comparison} we get immediately that that $v^*=(u^*)^{(c,\ep,\Phi)}$ satisfies $\|v^*\|_{\infty} \leq 2 \| c\|_{\infty}$. In the end, using the invariance of $\Dep$ by simultaneous translation we get
 \[
 \Dep(u^*,v^*) \geq \Dep(u^*,v+a)=\Dep (\bar{u},v)\geq \Dep(u,v).
 \]
\end{proof}

\section{Section \ref{sec:existmax}: Existence of a maximizer for the dual problem and complementary slackness}\label{app:existmax}

\subsection*{Proof of Proposition \ref{prop:dual}}\label{app:dual}

\begin{proof}
We first assume that $\gamma \in \Pi(\rho_1,\rho_2)$ is a density with respect to $\rho_1\otimes\rho_2$, otherwise $\OTep(\rho_1,\rho_2) = +\infty$ and therefore the inequality is automatically verified. Then,
\begin{align}
C_{\ep}(\gamma) &= \int_{X\times Y}cd\gamma + \ep G(\gamma) - \int_{X\times Y}(u+v)d\gamma + \int_X u d\rho_1 + \int_Y v d\rho_2, \\
&= \int_X u d\rho_1 + \int_Y v d\rho_2 + \int_{X\times Y} \ep \Phi(\gamma) - (u+v-c)\gamma d(\rho_1\otimes\rho_2), \\
&\geq  \int_X u d\rho_1 + \int_Y v d\rho_2 - \ep \int_{X\times Y} \Psi\left((u+v-c)/\ep\right) d(\rho_1\otimes\rho_2), \\
&= \Dep(u,v). 
\end{align}
where we used that $\Phi$ is a convex function, $\Psi = \Phi^{*}$ and $\ep \Phi(t) + \ep \Psi(s/\ep) \geq ts$ with equality if and only if $t \in \partial \Psi(s/\ep)$ (or $s\in \ep\partial \Phi(t)$).
\end{proof}

\subsection*{Proof of Theorem \ref{prop:equiv_comp}}\label{app:mainthm}

\begin{proof} 
We will prove $ 1 \Rightarrow 2  \Rightarrow 3  \Rightarrow 4  \Rightarrow 1$.

\begin{itemize}

\item[1. $\Rightarrow$ 2.] This is a straightforward application of Lemma \ref{lemma:dual}. In fact thanks to \eqref{est:optcond} we have $D_{\ep} (u^*, \Fcep(u^*)) \geq D_{\ep}(u^*,v^*)$; however, by the maximality of $u^*,v^*$ we have also $D_{\ep}(u^*,v^*) \geq D_{\ep} (u^*, \Fcep(u^*)) $, and so we conclude that $D_{\ep}(u^*,\Fcep(u^*))=D_{\ep}(u^*,v^*)$. Thanks to \eqref{lemma:betterpotentials} we then deduce that $v^*=\Fcep(u^*)$. We can follow a similar argument to prove that conversely $u^*=\Fcep(v^*)$.

\item[2. $\Rightarrow$ 3.] A simple calculation shows for every $u \in \Lg(\rho_1) $ and $ v \in \Lg(\rho_2)$ we have
\[
(\pi_1)_{\sharp} \Psi'\left((u+v-c)/\ep\right) = \Psi'\left((u(x)+\ucep(y)-c(x,y))/\ep\right)\rho_1, \text{ and }
\] 
\[
(\pi_2)_{\sharp} \Psi'\left((u+v-c)/\ep\right) = \Psi'\left(\vcep(x)+v(y)-c(x,y))/\ep\right)\rho_2.
\]
So if we assume 2, it is trivial to see that in fact $\gamma^* =\Psi'\left((u^*+v^*-c)/\ep\right)  \in \Pi (\rho_1, \rho_2)$.

\item[3. $\Rightarrow$ 4.] since $\gamma^* \in \Pi(\rho_1, \rho_2)$, from Lemma \ref{lemma:dual} we have
\begin{align}\label{eqn:ineq1compl}C_{\ep}(\gamma^*) &\geq D_{\ep} ( u,v) \qquad  &\forall \, u \in \Lg(\rho_1), v \in \Lg(\rho_2) \\
\label{eqn:ineq2compl}
C_{\ep}(\gamma) &\geq D_{\ep} ( u^*,v^*)  &\forall \, \gamma \in \Pi(\rho_1,\rho_2). 
\end{align}
Moreover, since by definition $\gamma^*= \Psi'\left((u^*+v^*-c)/\ep\right)$, Lemma \ref{lemma:dual} assure us also that
\begin{equation}\label{eqn:ineq3compl} C_{\ep}(\gamma^*) \geq D_{\ep}( u^*,v^*).
\end{equation}
Putting now \eqref{eqn:ineq1compl}, \eqref{eqn:ineq2compl} and \eqref{eqn:ineq3compl} together we obtain
$$ C_{\ep}(\gamma^*) \geq  D_{\ep} ( u^*,v^*) =  C_{\ep}(\gamma^*) \geq  D_{\ep} ( u,v);$$
in particular we have $C_{\ep}(\gamma) \geq C_{\ep}(\gamma^*)$ which grants us that $\gamma^*$ is a minimizer for \eqref{eq:mainKL} and that in particular $\OTep(\rho_1,\rho_2) =  C_{\ep}(\gamma^*) = D_{\ep} ( u^*,v^*)$.

\item[4. $\Rightarrow$ 1.] Since for all $\gamma\in\Pi(\rho_1,\rho_2)$ and $u\in\Lg(\rho_1),v\in\Lg(\rho_2)$ we have $C_{\ep}(\gamma) \leq \Dep(u,v)$, by minimizing the left-hand side of the former inequality in $\gamma$ we find that
$$\OTep(\rho_1,\rho_2) \geq D_{\ep} ( u,v) + \ep \qquad  \forall \, u \in \Lg(\rho_1), v \in \Lg(\rho_2);$$
using that by hypotesis $\OTep(\rho_1,\rho_2)= D_{\ep} ( u^*,v^*)$, we get that
$$D_{\ep} ( u^*,v^*)  \geq D_{\ep} ( u,v)   \qquad  \forall \, u \in \Lg(\rho_1), v \in \Lg(\rho_2),$$
that is, $u^*,v^*$ are maximizing potentials for \eqref{eq:dualitySep}.

\end{itemize}
Notice that in proving $3 \Rightarrow 4$ we incidentally proved that $\gamma^*$ is the (unique) minimizer.
\end{proof}

\section{Section \ref{sec:divergences}: Properties of $\OTep$ and $\Phi$-Sinkhorn divergences}\label{app:divergence}

\subsection*{Proof of Proposition \ref{prop:OTcont} }\label{appdiv:propOTcont}

\begin{proof}
Consider the sequences $(\rho^n_1)_{n\in\NN}$ and $(\rho^n_2)_{n\in\NN}$ weakly converging respectively to $\rho_1$ and $\rho_2$. For each $n\in\NN$ consider the couple of optimal potentials $(u_n,v_n)$. We can assume without loss of generality that one is the $u_n = (v_n)^{(c,\ep,\Phi)}$ and $v_n = (u_n)^{(c,\ep,\Phi)}$ due to Theorem \ref{prop:equiv_comp} (ii). 

By proposition \ref{lemma:F1F2}, we have that, for all $n\in\NN$, $u_n$ and $v_n$ are bounded and then, by Banach-Alaoglu theorem, there exists subsequences $(u_{n_k})_{n_k\in\NN}$ and $(v_{n_k})_{n_k\in\NN}$ such that $u_{n_k}\to \overline{u}$ and $v_{n_k}\to \overline{v}$ uniformly. Finally, by arguing similar to Theorem \ref{thm:kanto2Nmax}, one can show that $(\overline{u},\overline{v})$ is a maximizer couple for $\rho_1$ and $\rho_2$. 

Now we turn to the differentiability. Let $\rho^t_1 = \rho_1 + t\chi_1$, $\rho^t_2 = \rho_2 + t\chi_2$ and consider $(u,v)$ (resp. $(u_t,v_t)$) the optimal potentials for $\OTep(\rho_1,\rho_2)$ (resp. $\OTep(\rho^t_1,\rho^t_2)$). By one hand, using $(u_t,v_t)$ as competitors for $\OTep(\rho_1,\rho_2)$, we have
\begin{align*}
\dfrac{\OTep(\rho^t_1,\rho^t_2)-\OTep(\rho_1,\rho_2)}{t} &\leq  \int_X u_t d\chi_1 - \ep \int_{X\times Y}\Psi\left(\dfrac{u_t(x)-v_t(y)-c(x,y)}{\ep}\right)d(\chi_1\otimes\rho_2) + \\
&+\int_Y v_t d\chi_2 - \ep \int_{X\times Y}\Psi\left(\dfrac{u_t(x)-v_t(y)-c(x,y)}{\ep}\right)d(\rho_1\otimes\chi_2).
\end{align*}
In particular, since $u_t\to u$ and $v_t\to v$ uniformly
\begin{align*}
\limsup_{t\to 0}&\frac{1}{t}\left(\OTep(\rho^t_1,\rho^t_2)-\OTep(\rho_1,\rho_2)\right) \leq  \int_X u d\chi_1 + \int_Y v d\chi_2  \\ 
&-\ep \int_{X}\Psi\left(\dfrac{u(x)-v(y)-c(x,y)}{\ep}\right)d(\chi_1\otimes\rho_2)-\ep \int_{Y}\Psi\left(\dfrac{u(x)-v(y)-c(x,y)}{\ep}\right)d(\rho_1\otimes\chi_2).
\end{align*}
On the other hand, if $(u,v)$ are optimal potentials for $\OTep(\rho_1,\rho_2)$ then one can also obtain a lower bound 
\begin{align*}
\dfrac{\OTep(\rho^t_1,\rho^t_2)-\OTep(\rho_1,\rho_2)}{t} &\geq  \int_X u d\chi_1  - \ep \int_{X\times Y}\Psi\left(\dfrac{u(x)-v(y)-c(x,y)}{\ep}\right)d(\chi_1\otimes\rho_2) \\
&+ \int_Y v d\chi_2  - \ep \int_{X\times Y}\Psi\left(\dfrac{u(x)-v(y)-c(x,y)}{\ep}\right)d(\rho_1\otimes\chi_2).
\end{align*}
which implies that 
\begin{align*}
\liminf_{t\to 0}&\frac{1}{t}\left(\OTep(\rho^t_1,\rho^t_2)-\OTep(\rho_1,\rho_2)\right) \geq  \int_X u d\chi_1 + \int_Y v d\chi_2  \\ 
&-\ep \int_{X}\Psi\left(\dfrac{u(x)-v(y)-c(x,y)}{\ep}\right)d(\chi_1\otimes\rho_2)-\ep \int_{Y}\Psi\left(\dfrac{u(x)-v(y)-c(x,y)}{\ep}\right)d(\rho_1\otimes\chi_2).
\end{align*}
Combining both inequalities, we conclude
\[
\frac{\delta\OTep}{\delta \rho_1}(\rho_1,\rho_2) = u - \int_{Y}\Psi\left(\frac{u+v-c}{\ep}\right)d\rho_2, \, \quad \mbox{and} \quad \, \frac{\delta\OTep}{\delta \rho_2}(\rho_1,\rho_2) = v - \int_{X}\Psi\left(\frac{u+v-c}{\ep}\right)d\rho_1.
\]

\end{proof}

\subsection*{Proof of Proposition \ref{prop:sinkdivlimits}}

\begin{proof}
The convergence proof of $\OTepd(\rho_1,\rho_2)$ to the classical OT-loss $ \OT_0(\rho_1,\rho_2)$ follows by applying the block-approximation procedure developed in \cite{CarDuvPeySch} (see Theorem 2.7 and Definition 2.9) for the Entropy-case. We omit the details here because the block-approximation holds in the same way also in our framework. Now we turn to $(b)$, we prove directly the $\Gamma$-convergence: let $\gamma^0,\gammaep\in\Pi(\rho_1,\rho_2)$ such that $\gammaep$ solves \eqref{eq:mainKL} and $\gammaep$ weakly converges to $\gamma^0$ when $\epsilon\to 0^+$. Since $\Phi$ is lower semi-continuous then $G$ is lower semi-continuous for the weak convergence and we have
\[
\liminf_{\ep\to+\infty}\OTep(\rho_1,\rho_2) = \liminf_{\ep\to+\infty}\int_{X\times Y}cd\gammaep +\ep G(\gammaep|\rho_1\otimes\rho_2)\geq \liminf_{\ep\to+\infty}\int_{X\times Y}cd\gammaep = \OT_{\infty}(\rho_1,\rho_2).
\]
Now, by taking the constant sequence $\gammaep = \rho_1\otimes\rho_2$ we have that $G(\rho_1\otimes\rho_2|\rho_1\otimes\rho_2) = 0$ and therefore  $\limsup_{\ep\to+\infty}\OTep(\rho_1,\rho_2)\leq \OT_{\infty}(\rho_1,\rho_2)$.

\end{proof}

\section{Convergence proof of the generalized Sinkhorn algorithm (Theorem \ref{thm:convIPFP})}\label{sec:convergenceIPFP}


\begin{proposition}\label{prop:EstPot} Let $(X,d_X)$, $(Y,d_Y)$ be complete separable metric spaces, $\ep>0$, $c:X\times Y\to \RR$.
If $\vert c\vert \leq M$, then $\Fcep:L^{\infty}(\rho_1)\to L^p(\rho_2)$ is a $1$-Lipschitz compact operator.
\end{proposition}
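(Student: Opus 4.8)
The plan is to treat the two assertions separately, the $1$-Lipschitz bound being essentially a monotonicity computation and the compactness being the real content. For the Lipschitz estimate I would use the pointwise characterisation of the transform from the proof of Lemma~\ref{lemma:F1F2}: $\Fcep(u)(y)$ is the unique $t$ solving $\int_X\Psi'\big((u(x)+t-c(x,y))/\ep\big)\,d\rho_1(x)=1$, and the left-hand side is strictly increasing in $t$. Given $u_1,u_2\in L^\infty(\rho_1)$ with $\delta=\|u_1-u_2\|_\infty$, from $u_2\le u_1+\delta$ a.e.\ and monotonicity of $\Psi'$ one gets $\int_X\Psi'\big((u_1(x)+\delta+\Fcep(u_2)(y)-c(x,y))/\ep\big)\,d\rho_1(x)\ge1$, so comparison with the equation defining $\Fcep(u_1)(y)$ forces $\Fcep(u_1)(y)\le\Fcep(u_2)(y)+\delta$; exchanging the roles of $u_1,u_2$ gives $\|\Fcep(u_1)-\Fcep(u_2)\|_{L^\infty(\rho_2)}\le\|u_1-u_2\|_{L^\infty(\rho_1)}$, and since $\rho_2$ is a probability measure this also controls the $L^p(\rho_2)$ distance.

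For compactness, fix $R>0$ and $B_R=\{u:\|u\|_\infty\le R\}$. By Lemma~\ref{lemma:F1F2}$(i)$ and \eqref{eqn:comparison} one has $\|\Fcep(u)\|_\infty\le M+R$ on $B_R$, so in the defining equation the argument of $\Psi'$ always lies in a fixed compact interval $I_R\subset\R$ (taken slightly larger to absorb small shifts). On $I_R$ the continuous strictly increasing function $\Psi'$ has a modulus of continuity $\omega$ and a strictly positive modulus of strict monotonicity $m(\eta):=\min\{\Psi'(s+\eta)-\Psi'(s):s,s+\eta\in I_R\}$, both depending only on $\Psi$ and $R$. Now I would approximate the cost: since $(X,\rho_1)$ and $(Y,\rho_2)$ have countably generated $\sigma$-algebras, choose simple functions $c_k:X\times Y\to\R$ measurable with respect to $\mathcal G_k\otimes\mathcal F_k$ with $\mathcal G_k,\mathcal F_k$ finite, $|c_k|\le M$, and $c_k\to c$ in $L^1(\rho_1\otimes\rho_2)$, and let $T_k$ denote the corresponding $(c_k,\ep,\Phi)$-transform. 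Since $c_k(\cdot,y)$ depends on $y$ only through its $\mathcal F_k$-atom, $T_k(u)$ is $\mathcal F_k$-measurable, hence lies in a fixed finite-dimensional space, and being uniformly bounded by $M+R$ on $B_R$ the set $T_k(B_R)$ is precompact in $L^p(\rho_2)$.

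The key step is then $\sup_{u\in B_R}\|\Fcep(u)-T_k(u)\|_{L^p(\rho_2)}\to0$. Fixing $y$ and $u\in B_R$, set $v=\Fcep(u)(y)$, $v_k=T_k(u)(y)$, $F_y(t)=\int_X\Psi'((u+t-c(\cdot,y))/\ep)\,d\rho_1$; then $F_y(v)=1$, $|F_y(v_k)-1|\le\eta_k(y):=\int_X\omega(|c(\cdot,y)-c_k(\cdot,y)|/\ep)\,d\rho_1$, while $F_y(v+\eta)\ge1+m(\eta/\ep)$ and $F_y(v-\eta)\le1-m(\eta/\ep)$, which forces $|v_k-v|\le\eta$ wherever $\eta_k(y)<m(\eta/\ep)$. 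Since $0\le\eta_k\le\omega(2M/\ep)$ and $\|\eta_k\|_{L^1(\rho_2)}\to0$ (dominated convergence, as $c_k\to c$ in measure and $\omega(0)=0$), Markov's inequality gives $\sup_{u\in B_R}\|\Fcep(u)-T_k(u)\|_{L^1(\rho_2)}\le\eta+2(M+R)\,m(\eta/\ep)^{-1}\|\eta_k\|_{L^1(\rho_2)}$ for every $\eta>0$, hence this supremum tends to $0$; interpolating with the uniform $L^\infty$ bound via $\|f\|_{L^p}\le\|f\|_\infty^{1-1/p}\|f\|_{L^1}^{1/p}$ upgrades it to $L^p(\rho_2)$. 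Consequently $\Fcep|_{B_R}$ is a uniform limit of maps with precompact image, so $\Fcep(B_R)$ is totally bounded in the complete space $L^p(\rho_2)$; as $R>0$ is arbitrary, $\Fcep$ is a compact operator, which together with the first paragraph proves the statement.

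I expect the main obstacle to be precisely this uniform-in-$u$ stability of the transform: since a merely bounded cost admits no $L^\infty$ approximation by simple functions, one cannot get equicontinuity of the $\Fcep(u)$ directly, and must instead work with the $L^1$ approximation $c_k\to c$, recovering control of the transforms by using both that $c-c_k$ is small in measure and that $\Psi'$ is uniformly strictly increasing on $I_R$. Without the uniform modulus $m(\cdot)$ the bound on $\|\Fcep(u)-T_k(u)\|$ would depend on $u$, the convergence $T_k\to\Fcep$ on $B_R$ would not be uniform, and the ``uniform limit of compact maps'' conclusion would break down; the $L^\infty$ a priori bounds of Lemma~\ref{lemma:F1F2} are what confine everything to the fixed interval $I_R$ on which these moduli are available.
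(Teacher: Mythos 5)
Your argument for the $1$-Lipschitz bound is essentially the paper's (monotone comparison of the implicit equation $\int_X\Psi'((u+t-c)/\ep)\,d\rho_1=1$). For compactness you take a genuinely different route: instead of the paper's Lusin-theorem argument and the compactness criterion from Proposition~5.1 of \cite{DMaGer19}, you approximate $c$ in $L^1(\rho_1\otimes\rho_2)$ by costs $c_k$ measurable with respect to finite product $\sigma$-algebras, so that the corresponding transforms $T_k$ have finite-dimensional (hence precompact) image, and you then aim to show $T_k\to\Fcep$ uniformly on bounded sets so that $\Fcep$ is a uniform limit of maps with totally bounded image. That scheme is clean and would indeed prove the statement -- but there is a genuine gap in the stability estimate it rests on.

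You assert that $\Psi'$ is strictly increasing on the compact interval $I_R$, so that $m(\eta):=\min\{\Psi'(s+\eta)-\Psi'(s):s,s+\eta\in I_R\}>0$. Under the paper's hypotheses this is false in general: $\Phi\in C^1((0,\infty))$ gives strict convexity of $\Psi=\Phi^*$ only on $\overline{\Phi'((0,\infty))}$, and whenever $\Phi(0)<+\infty$ (quadratic $\Phi(z)=\tfrac12(z-1)^2$, Tsallis entropies -- both listed in the paper) one has $\Psi'\equiv 0$ on $(-\infty,\Phi'(0^+)]$. Since $I_R$ has length of order $(M+R)/\ep$ around $0$, it meets this flat region as soon as $\ep$ is small, so $m(\eta)=0$ for small $\eta$, the inequalities $F_y(v\pm\eta)\gtrless 1\pm m(\eta/\ep)$ become vacuous, and the Markov bound $\|\Fcep(u)-T_k(u)\|_{L^1(\rho_2)}\le\eta+2(M+R)m(\eta/\ep)^{-1}\|\eta_k\|_{L^1(\rho_2)}$ collapses. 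This is precisely the subtlety the paper's proof is careful about: it only invokes strict monotonicity of $\Psi'$ where $\Psi'$ is valued above $1-\eta/2$, and uses the normalization $\int_X\Psi'((u+v-c)/\ep)\,d\rho_1=1$ together with the uniform upper bound on $\Psi'$ over $I_R$ to extract, for every $y$, a set $C_y$ of $\rho_1$-measure bounded below (by $\eta/(2M)$ in the paper's notation) on which the strict-monotonicity gain $\delta(\alpha)$ is available. Your proof can be repaired by the same device: replace the pointwise lower bound $\Psi'(s+\eta/\ep)-\Psi'(s)\geq m(\eta/\ep)$ with the integral lower bound $F_y(v+\eta)-1\geq\rho_1(C_y)\,\delta(\eta/\ep)$, which is uniform in $u\in B_R$ and in $y$. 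With that correction the finite-rank approximation and the uniform-limit argument go through, and you obtain a valid alternative proof; as written, however, the modulus $m$ is not positive for all admissible $\Phi$ and the argument does not close.
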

\begin{proof}
We first prove that $\Fcep$ is $1$-Lipschitz. In fact, letting $u, \tilde{u} \in L^{\infty}(\rho_1)$, we can perform a calculation very similar to what has been done in lemma \ref{lemma:F1F2} (ii):  we have that $\ucep$ and $(\tilde{u})^{(c,\ep,\Phi)}$ are such that
$$1 = \int_{X}\Psi'((u(x)+\ucep(y)-c(x,y))/\ep)d\rho_1(x), \quad \forall~ y\in Y.$$
$$1 = \int_{X}\Psi'((\tilde{u}(x)+(\tilde{u})^{(c,\ep,\Phi)}(y)-c(x,y))/\ep)d\rho_1(x), \quad \forall~ y\in Y.$$

Denote by $\beta^{y}(t) = \int_{X}\Psi'((u(x)+t-c(x,y))/\ep)d\rho_1$ and we similarly define $\tilde{\beta}^y(\tilde{t})$. Since $\Psi'$ is a increasing function, $\beta^y(t)$ is also increasing in $t$. Assume that $t_{y}$ and $\tilde{t}_{{y}}$ are such that $\beta^{y}(t_y) = \tilde{\beta}^{{y}}(\tilde{t}_{{y}})$, then we have
\begin{align*}
\tilde{\beta}^y(\tilde{t}_y) =\beta^{y}(t_y) &= \int_{X}\Psi'((u(x)+t_y-c(x,y))/\ep))d\rho_1(x) \\
&\geq \int_{X}\Psi'((\tilde{u}(x)+t_y-\Vert u-\tilde{u}\Vert_{\infty}-c(x,{y}))/\ep)d\rho_1(x) = \tilde{\beta}^{{y}}(t_{y}-\Vert u-\tilde{u}\Vert_{\infty}),
\end{align*}
which implies that $\tilde{t}_y \geq t_{{y}}-\Vert u-\tilde{u}\Vert_{\infty}$. By exchanging the roles of $t_y$ and  $\tilde{t}_{{y}}$  we conclude that $| \tilde{t}_{\tilde{y}} -  t_y| \leq \Vert u-\tilde{u}\Vert_{\infty}$. Now we can consider $t_y=\Fcep(u)(y)$ and $\tilde{t}_{y}=\mathcal{F}^{(c,\ep,\Phi)}(\tilde{u})(y) $, and integrating over $y$ we get precisely that $\Fcep$ is a $1$-Lipschitz operator from $L^{\infty}(\rho_1)$ to $L^p(\rho_1)$. This proves in particular that $\Fcep: L^{\infty}(\rho_1) \to L^p(\rho_2)$ is continuous. In order to prove that $\Fcep$ is compact it suffices to prove that $\Fcep(B)$ is precompact for every bounded set $B \subset L^{\infty}(\rho_1)$. We will use Proposition 5.1 in \cite{DMaGer19}; thanks to \eqref{eqn:comparison} for sure we have that is $B$ is bounded then $\Fcep(B)$ is bounded in $L^{\infty}(\rho_2)$.

We fix $u \in L^{\infty}(\rho_1)$, $\| u\|_{\infty} \leq H$ and we consider $v=\Fcep (u)$.

First of all we consider the following two properties, which are always true thanks to \eqref{eqn:comparison} and the fact that $\Psi'$ is increasing and nonnegative
\begin{equation}\label{eqn:property1}
     |u(x)+ v(y) - c(x,y)| \leq H+ \|c \|_{\infty} \quad , \quad 0 \leq \Psi' \Bigl( \frac{u(x)+ v(y) - c(x,y)}{\ep} \Bigr) \leq M;
\end{equation}
 we can choose for example $M= \Psi' ( (H+\|c\|_{\infty}) /\ep ) $.

Let us denote $\gamma = \rho_1\otimes\rho_2$. Since $c \in L^{\infty}(\gamma)$, by Lusin theorem we have that for every $\sigma>0$ there exists $N_{\sigma} \subset X \times Y$,  with $\gamma ( N_{\sigma})< \sigma$, such that $c|_{(N_{\sigma})^c}$ is uniformly continuous, with modulus of continuity $\omega_{\sigma}$. In particular there exists $\phi \in C(X \times Y)$ which is $\omega_{\sigma}$-continuous that extends $c|_{(N_{\sigma})^c}$; we now consider the slices of $N_{\sigma}$
$$N_y=\{ x \in X \; : (x,y) \in N_{\sigma} \} \qquad G_y = X\setminus N_y.$$
In what follows $N_y$ will be considered the bad points while $G_y$ are the good points (where $c$ coincides with $\phi$). Since by Fubini
$$\gamma(N_{\sigma}) = \int_Y \rho_1(N_y) \, d \rho_2(y),$$
we deduce that the set $Y_g \subset Y$ where $\rho_1(N_y) < \sigma^{1/2}$ has measure at least $1-\sigma^{1/2}$. From now on we consider $y,y' \in Y_g$ and we can now estimate the oscillation of $u (y)$: when we subtract the two optimality conditions in $y$ and $y'$ we get
\begin{equation}\label{eqn:difference}
    \int_X \Psi'( (v(y) + u(x) - c(x,y))/\ep ) - \Psi'(v(y') + u(x) - c(x,y'))/\ep)\, d \rho_1(x) =0
\end{equation} 

Now we multiply and divide inside by $v(y)-c(x,y) -v(y') + c(x,y')$ and we denote by 

$$K^{y,y'}(x) = \frac{\Psi'\bigl( \frac{v(y) + u(x) - c(x,y)}{\ep} \bigr) - \Psi'\bigl( \frac{v(y') + u(x) - c(x,y')}{\ep} \bigr)}{v(y)-c(x,y) -v(y') + c(x,y')} \geq 0, $$

where the positivity is granted thanks to the fact that  $\Psi'$ is increasing.

Now we split the estimate in two cases, according to some threshold $\alpha$ to be decided later. We denote 
$A_{\alpha} =\{x \in X \; : \; |u(y)-u(y') - c(x,y) +c(x,y')| \leq \alpha\}$

\begin{itemize}
    \item If $\rho_1(A_{\alpha}) > 2\sigma^{1/2}$ then we can say that
    $A_{\alpha} \cap G_y \cap G_y' \neq \emptyset$ and so there exists $x \in X$ such that $(x,y), (x,y') \in N_{\sigma}$ and $x \in A_{\alpha}$, in particular
$$|u(y) - u(y')| \leq | c(x,y)-c(x,y')| + \alpha \leq \omega_{\sigma}(y,y') + \alpha.$$
\item $\rho_1(A_{\alpha}) \leq 2 \sigma^{1/2}$. In this case we want to estimate the integral in \eqref{eqn:difference}; we split the integral inside $A_{\alpha} \cup N_y \cup N_{y'}$ and in  $B_{\alpha}:=X \setminus ( A_{\alpha} \cup N_y \cup N_{y'}) $. We write, using the definition of $K^{y,y'}(x)$,
$$ \int_{B_{\alpha}} (v(y)-c(x,y) -v(y') + c(x,y')) K^{y,y'}(x) \, d \rho_1(x) =\int_{A_{\alpha} \cup N_y \cup N_{y'}} ( \ldots ) \, d \rho_1, $$
$$(v(y) - v(y'))\int_{B_{\alpha}} K^{y,y'}(x) \, d \rho_1(x) = \int_{B_{\alpha}} (c(x,y)- c(x,y'))K^{y,y'}(x) \, d \rho_1+ \int_{A_{\alpha} \cup N_y \cup N_{y'}} ( \ldots ) \, d \rho_1, $$
$$(v(y) - v(y')) = \frac{\int_{B_{\alpha}} (c(x,y)- c(x,y'))K^{y,y'}(x)}{\int_{B_{\alpha}} K^{y,y'}(x) \, d \rho_1(x)} \, d \rho_1+ \frac{\int_{A_{\alpha} \cup N_y \cup N_{y'}} ( \ldots ) \, d \rho_1}{\int_{B_{\alpha}} K^{y,y'}(x) \, d \rho_1(x)}. $$
where with $(\ldots)$ we mean the integrand in \eqref{eqn:difference}, which we know already to be smaller than $M$. Now we know that in $B_{\alpha}$ we have $c(x,y)=\phi(x,y)$ and $c(x,y') = \phi(x,y')$, in particular, using the $\omega_{\sigma}$-continuity of $\phi$, we obtain
\begin{equation}\label{eqn:vv}
|v(y) - v(y')| \leq \omega_{\sigma} (y,y') + \frac{\rho_1( A_{\alpha} \cup N_y \cup N_{y'} ) M}{\int_{B_{\alpha}} K^{y,y'}(x) \, d \rho_1(x)} \leq  \omega_{\sigma} (y,y') + \frac{4 \sigma^{1/2} M}{\int_{B_{\alpha}} K^{y,y'}(x) \, d \rho_1(x)}
\end{equation}

In order to conclude it is sufficient to get an estimate from above of $\int_{B_{\alpha}} K^{y,y'}(x) \, d \rho_1(x)$. Using the fact that $\Psi'$ is strictly increasing when valued in $(1-\eta, +\infty)$, we have (by compactness) that $$\delta(\alpha):=\inf\{ |\Psi'(s)-\Psi'(t)| \; : \; |s-t|\geq \alpha, \; 1-\tfrac{\eta}2  \leq \Psi'(s) \leq  M \} >0. $$
Moreover, since $\int_X \Psi' ( \frac{u+v-c}{\ep} ) \, d \rho_1 =1$ and $\Psi'(\frac{u+v-c}{\ep})<M$, we get, for every $y \in Y$ that 
$$C_y:= \left\{x \; : \; \Psi' \Bigl( \frac {u(x)+v(y)-c(x,y)}{\ep}\Bigr) \geq 1-\frac{\eta}2\right\} \quad \Longrightarrow \quad \rho_1(C_y) \geq \frac{\eta}{2M}. $$
Using the definitions of $C_y$ and $\delta(\alpha)$ we get that $K^{y,y'}(x) \geq \frac{\delta(\alpha)}{4\|c\|_{\infty}}$ for $x \in C_y \cap B_{\alpha}$. Since $\rho_1(B_{\alpha})\geq 1- 4 \sigma^{1/2}$ we have $\rho_1( C_y \cap B_{\alpha})\geq \frac{\eta}{2M} - 4 \sigma^{1/2} \geq \frac {\eta}{4M}$ for sufficiently small $\sigma$. In particular we have

$$\int_{B_{\alpha}} K^{y,y'}(x) \, d \rho_1 \geq \frac {\delta (\alpha)}{4\|c \|_{\infty}} \cdot \rho_1( B_{\alpha} \cap C_y) \geq \frac { \delta(\alpha) \eta}{16 \|c\|_{\infty} M }. $$

Plugging this estimate into \eqref{eqn:vv} we obtain
\begin{equation}\label{eqn:vvII}
    |v(y)-v(y')| \leq \omega_{\sigma} ( y,y') + C\cdot\frac{ \sigma^{1/2}}{\delta(\alpha)},
\end{equation}

where the constant $C$ depends only on $\Psi'$ and $\|c\|_{\infty}$ but not on $u$ or $v$.

\end{itemize}
Summing it up we then obtain that if $y,y' \in Y_g$ we have that 
$$ |v(y)-v(y')| \leq \omega_{\sigma} (y,y') + \max \left\{ C \frac {\sigma^{1/2}}{\delta(\alpha)} , \alpha \right\}.$$
It is clear that choosing $\alpha$ and then $\sigma$ we can apply Proposition 5.1 in \cite{DMaGer19} to conclude.

\end{proof}

\begin{proof}[Proof of Theorem \ref{thm:convIPFP}]
Let $(u^n)_{n\in\NN}$ and $(v^n)_{n\in\NN}$ be the IPFP sequence defined in \eqref{eq:IPFPiteration}. One can rewrite it with the help of the $(c, \ep,\Phi)$-transform:
\[
\begin{cases}
v_{2n+1} = (u_{2n})^{(c,\ep,\Phi)} \\
u_{2n+1} = u_{2n} \\ 
     \end{cases}, \quad \begin{cases}
v_{2n+2} =  v_{2n+1} \\
u_{2n+2} = (v_{2n+1})^{(c,\ep,\Phi)} \\ 
     \end{cases}.
\] 	

Notice that, as soon as $n\geq2$, we have $u_n \in L^{\infty}(\rho_1)$ and $v_n \in L^{\infty}(\rho_2)$ thanks to the regularizing properties of the $(c,\ep,\Phi)$-transforms proven in Lemma \ref{lemma:F1F2} and, moreover, thanks to \eqref{est:optcond}  and Proposition \ref{prop:dual} we have 
\[
\Dep(u_n,v_n)\leq \Dep(u_{n+1},v_{n+1}) \leq \dots \leq \OTep(\rho_1,\rho_2).
\]
Then, by the same argument used in the proof of Lemma \ref{lemma:betterpotentials} it is easy to prove that there for each $n \geq 2$ there exists $\ell_n \in \R$ such that $ \|u_n - \ell_n \|_{\infty}, \|v_n +\ell_n\| \leq 2 \|c\|_{\infty}$. Now, thanks to Proposition \ref{prop:EstPot} we have that the sequeces $u_n - \ell_n$ and $v_n +\ell_n$ are precompact in every $L^p$, for $1 \leq p < \infty$; in particular let us consider any limit point $u,v$. Then we have a subsequence $u_{n_k},v_{n_k}$ such that $u_{n_k}\to u$,$v_{n_k}\to v$ in $L^{\infty}$ and $u_{n_k+1} = (v_{n_k})^{(c,\ep)}$ (or the opposite). Using the continuity in $L^p$ of the $(c,\ep,\Phi)$-transforms, and the fact that an increasing and bounded sequence has vanishing increments, we obtain
\[
\Dep(\vcep,v) - \Dep(u,v) = \lim_{n_k\to\infty} \Dep(u_{n_k+1},v_{n_k+1}) - \Dep({u_{n_k}},v_{n_k}) = 0.
\]   
In particular, by \eqref{est:optcond}, we have $u=\vcep$. Analogously, we obtain that $v=\ucep$ by doing the same calculation using the potentials $(u_{n_{k+2}}, v_{n_{k+2}})$ and then 
\[
\Dep(u,\ucep) - \Dep(u,v) = \lim_{n_k\to\infty} \Dep(u_{n_k+2},v_{n_k+2}) - \Dep({u_{n_k}},v_{n_k}) = 0.
\]
Now we can use Theorem \ref{prop:equiv_comp}: the implication $2 \Rightarrow 1$ proves that $(u,v)$ is a maximizer\footnote{in order to prove that there is a unique limit point at this stage, it is sufficient to take $\ell_n$ that minimizes $\| u_n - \ell_n - u\|_2$.} and we get the convergence result for $u^n+\lambda^n$ and $v^n-\lambda^n$, setting $\lambda^n = \ell_n$.

In order to prove also the convergence of the plans, it is sufficient to note that for free we have $u_n-\ell_n+v_n+\ell_n = u_n+v_n \to u+v$ in $L^p(\rho_1 \otimes \rho_2)$, since now the translations are cancelled. Again, the fact that the $\Psi$ is Lipschitz on bounded domains and the boundedness of $k$, will let us conclude that in fact $\gamma^n \to \gamma$ in $L^p(\rho_1 \otimes \rho_2)$ for every $1 \leq p < \infty$.
\end{proof}

\section{Wasserstein Barycenter and Multi-marginal formulation}\label{app:multi}

For the sake of completeness, we briefly introduce the multi-marginal formulation of the problem \eqref{eq:mainKL} introduced in the main text. 

Let $X_1,\dots,X_N$ be complete separable metric spaces, $c_N:X_1\times\dots\times X_N\to\R$ be a bounded cost function and, for all $i\in \lbrace 1,\dots,N \rbrace$, $\Phi$ be an Entropy and let $\rho_i\in\Pro(X_i)$ be probability measures. The multi-marginal optimal transport problem with convex-regularization is defined by
\begin{equation}\label{app:KLN}
\OTNep(\rho_1,\dots,\rho_N) = \min_{\gamma^N\in\Pi_N(\rho_1,\dots,\rho_N)}\int_{X_1\times\dots\times X_N}c_Nd\gamma^N + \ep\int_{X_1\times \dots\times X_N}\Phi\left(\dfrac{d\gamma^N}{d\rho^N}\right) d\rho^N,   
\end{equation}
where we denoted $\rho^N = \otimes^N_i\rho_i$ and $\Pi_N(\rho_1,\dots,\rho_N)$ is the set of probabilities $\gamma^N \in \Pro(X_1\times\dots\times X_N)$ having $i$-th marginal equal to $\rho_i$. The dual formulation of \eqref{app:KLN} reads
\begin{align}\label{eq:dualN}
\OTNep(\rho_1,\dots,\rho_N) &= \max\lbrace \DNep(u_1,\dots,u_N) \, :  \, u_i \in \Lg(\rho_i), \, i\in\lbrace 1,\dots,N\rbrace \rbrace \\
&:= \max_{u_1,\dots,u_N\in \Lg}\left\lbrace \sum^N_{i=1}\int_{X_i}u_id\rho_i - \ep\int_{X_1\times\dots\times X_N}\Psi\left(\frac{\sum^N_{i=1}u_i-c_N}{\ep}\right)d\rho^N\right\rbrace.
\end{align}
Notice that if $c_N(x_1,\dots,x_N) = \sum_{1\leq i<j\leq N}\vert x_i-x_j\vert^2$ and $\ep=0$ then \eqref{app:KLN} is equivalent to the Wasserstein Barycenter \cite{AguCar,BenCarCutNenPey2015, CutDou2014, GaSw}.

The main idea in order to extend our results in this setting is to consider the $N=2$ marginal problem with $X=(X_1,\rho_1)$, $Y=(X_2\times\dots\times X_N,\otimes^N_{i=2}\rho_i)$ and derive analogous properties as in Lemma \ref{lemma:F1F2} and \ref{lemma:betterpotentials} of the $(N,c,\ep,\Phi)$-transforms  $\Ficep:\Lg(\otimes^N_{j\neq i}\rho_j)\to \Lg(\rho_i)$ defined below 
\begin{equation}\label{eq:cepN}
\Ficep(\hat{u}_i) \in \argmmax \lbrace \Dep(u_1,\dots,u_N) : u_i \in \Lg(\rho_i) \rbrace.
\end{equation}
where we denoted by $\hat{u}_i = (u_1,\dots,u_{i-1},u_{i+1},\dots,u_N), \, \forall i\in\lbrace 1,\dots,N\rbrace$. 

In particular, when $\Phi$ is the Shannon entropy, \eqref{eq:cepN} simply reads
\[
\Ficep (\hat{u}_i)= -\ep\ln\left(\int_{\Pi^N_{j\neq i}X_j}\exp\left(\frac{\sum_{j\neq i}u_j-c_N}{\ep}\right)d\left(\underset{j\neq i}{\overset{N}{\otimes}}\rho_j\right)\right), \, \forall i \in \lbrace 1,\dots, N\rbrace.   
\]

The prove of the next theorem follow the same lines of Theorem \ref{prop:equiv_comp}. 

\begin{theorem}[Equivalence and complementarity condition]\label{teo:equiv_compN}
Let $\ep>0$ be a positive number, $X_1,\dots,X_N$ be complete separable metric spaces, $c_N:X_1\times\dots\times X_N\to \mathbb{R}$ be a bounded cost function, $\Phi$ be an Entropy, $\Psi = \Phi^*$, $\rho_i \in \mathcal{P}(X_i), \forall i \in\lbrace 1,\dots,N\rbrace$, be probability measures. Then given $u_i^* \in \Lg(\rho_i), i\in\lbrace 1,\dots,N\rbrace$, the following are equivalent:
\begin{enumerate}
\item \emph{(Maximizers)} $u_1^*,\dots,u_N^*$ are maximizing potentials for \eqref{eq:dualN};
\item \emph{(Maximality condition)} $\Fcep_i (u_i^*)=u_i^*$ and $u_i^*\in L^{\infty}(\rho_i), \, \forall \, i\in\lbrace 1,\dots,N\rbrace$; 
\item \emph{(Minimizer)} let $\gamma^*= \Psi'\left((\sum^N_{i=1}u_i^*(x_i)-c_N(x_1,\dots,x_N))/\ep\right) \cdot \rho^N$, then $\gamma^* \in \Pi_N(\rho_1, \dots, \rho_N)$;
\item \emph{(Complementary Slackness)} $\OTNep(\rho_1,\dots,\rho_N) = D^N_{\ep} (u_1^*,\dots,u_N^*)$.
\end{enumerate}
Moreover in those cases $\gamma^*$, as defined in 3, is also the (unique) minimizer for the problem \eqref{eq:mainKL}.
\end{theorem}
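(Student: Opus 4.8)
The plan is to mirror the proof of Theorem~\ref{prop:equiv_comp} essentially verbatim, i.e.\ to establish the cyclic chain $1 \Rightarrow 2 \Rightarrow 3 \Rightarrow 4 \Rightarrow 1$, after first porting to the $N$-marginal setting the two auxiliary facts on which that proof rests: the maximality property of the transform (Lemma~\ref{lemma:dual}) and the weak-duality inequality (Proposition~\ref{prop:dual}), together with the a priori estimates of Lemmas~\ref{lemma:F1F2} and~\ref{lemma:betterpotentials}. The device that makes this transfer routine is that, once all potentials but the $i$-th are frozen, $\DNep(u_1,\dots,u_N)$ equals --- up to an additive constant --- the two-marginal dual functional $D_{\ep}(u_i,0)$ on the product $X_i \times Z_i$, $Z_i := \prod_{j\neq i}X_j$, with marginals $\rho_i$ and $\otimes_{j\neq i}\rho_j$ and \emph{effective cost} $\tilde c_i(x_i,\hat x_i) := c_N(x_1,\dots,x_N) - \sum_{j\neq i}u_j(x_j)$ (here $\hat x_i = (x_j)_{j\neq i}$); hence $\Ficep(\hat u_i) = (0)^{(\tilde c_i,\ep,\Phi)}$ and Lemma~\ref{lemma:dual} gives at once
\[
\DNep\big(u_1,\dots,\Ficep(\hat u_i),\dots,u_N\big) \ge \DNep(u_1,\dots,u_i,\dots,u_N),
\]
with equality iff $u_i = \Ficep(\hat u_i)$. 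In the same way, Proposition~\ref{prop:dual} becomes the bound $\DNep(u_1,\dots,u_N) \le \OTNep(\rho_1,\dots,\rho_N)$, valid for all admissible potentials, with equality for a plan $\gamma$ iff $\gamma$ has density $\Psi'\big((\sum_i u_i - c_N)/\ep\big)$ with respect to $\rho^N$; this is the Fenchel--Young inequality $\ep\Phi(t)+\ep\Psi(s/\ep) \ge st$ (equality iff $t=\Psi'(s/\ep)$) applied pointwise, together with $\int(\sum_i u_i)\,d\gamma = \sum_i\int u_i\,d\rho_i$ for $\gamma\in\Pi_N$.

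For the a priori estimates I would observe that the oscillation bound of Lemma~\ref{lemma:F1F2}(i) survives unchanged, since $|\tilde c_i(x_i,\hat x_i) - \tilde c_i(x_i',\hat x_i)| = |c_N(\dots)-c_N(\dots)| \le 2\|c_N\|_\infty$ no matter how large the frozen potentials are, so the monotonicity of $\Psi'$ yields ${\rm osc}(\Ficep(\hat u_i)) \le 2\|c_N\|_\infty$; likewise the Lipschitz/$\omega$-continuity statement of Lemma~\ref{lemma:F1F2}(ii) carries over. Turning these oscillation bounds into genuine $L^\infty$ bounds --- the content of the parenthetical claim in item~2 --- is the one point that requires real care, and is where I expect the bulk of the work to go: one applies the pointwise comparison \eqref{eqn:comparison} to the transform of the zero function against the \emph{bounded} cost $c_N - \sum_{j\neq i}\bar u_j$, with $\bar u_j := u_j - \inf u_j \in [0,2\|c_N\|_\infty]$, to control $\Ficep(\hat u_i)$ up to the sum of the normalizing constants, and then exploits the $(N-1)$-dimensional invariance $(u_i)_i \mapsto (u_i + a_i)_i$, $\sum_i a_i = 0$, of both $\DNep$ and the system $\{u_i = \Ficep(\hat u_i)\}_i$ to normalize all $N$ potentials simultaneously into $L^\infty$. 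This is the multi-marginal replacement for Lemma~\ref{lemma:betterpotentials}; the only genuine obstacle is bookkeeping which combinations of the $N$ additive constants are controlled and which remain free.

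Granting these ingredients, the four implications are copies of the argument in Appendix~\ref{app:mainthm}. $1\Rightarrow 2$: maximality of $(u_i^*)_i$ together with the displayed inequality above forces $u_i^* = \Ficep(\hat u_i^*)$ for every $i$, and the estimates just described give $u_i^* \in L^\infty(\rho_i)$ (up to the trivial transformation). $2\Rightarrow 3$: the defining equation of $\Ficep$ reads $\int_{Z_i}\Psi'\big((\sum_j u_j^* - c_N)/\ep\big)\,d(\otimes_{j\neq i}\rho_j)=1$ for $\rho_i$-a.e.\ $x_i$, which is exactly the statement that the $i$-th marginal of $\gamma^* := \Psi'\big((\sum_j u_j^* - c_N)/\ep\big)\cdot\rho^N$ is $\rho_i$; as $\Psi'\ge0$ and this holds for every $i$, $\gamma^*\in\Pi_N(\rho_1,\dots,\rho_N)$. $3\Rightarrow 4$: by construction $\gamma^*$ realizes equality in the weak-duality inequality for the pair $(\gamma^*,(u_i^*)_i)$, so minimizing over $\gamma\in\Pi_N$ yields $\OTNep(\rho_1,\dots,\rho_N)=C^N_\ep(\gamma^*)=\DNep(u_1^*,\dots,u_N^*)$ (with $C^N_\ep$ the objective of \eqref{app:KLN}); thus $\gamma^*$ attains the primal minimum, and strict convexity of $\Phi$ makes it the unique minimizer, which also proves the ``moreover'' clause. $4\Rightarrow 1$: minimizing the weak-duality inequality over $\gamma$ gives $\DNep(u_1,\dots,u_N)\le\OTNep(\rho_1,\dots,\rho_N)=\DNep(u_1^*,\dots,u_N^*)$ for all admissible potentials, i.e.\ $(u_i^*)_i$ is a maximizer.
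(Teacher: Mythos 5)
Your proposal is correct and follows exactly the route the paper itself takes: the paper proves Theorem~\ref{teo:equiv_compN} only by remarking that ``the proof follows the same lines as Theorem~\ref{prop:equiv_comp},'' and your argument is a faithful fleshing-out of that remark, with the effective-cost device $\tilde c_i = c_N - \sum_{j\neq i}u_j$ making the reduction to the two-marginal lemmas (\ref{lemma:dual}, \ref{lemma:F1F2}, \ref{lemma:betterpotentials}, Proposition~\ref{prop:dual}) explicit. You also correctly flag the one place where the transfer is not literally verbatim, namely upgrading the oscillation bounds to genuine $L^\infty$ bounds using the $(N-1)$-parameter translation invariance $(u_i)\mapsto(u_i+a_i)$, $\sum_i a_i=0$; this is indeed the only point of real care and the paper glosses over it, so your honest bookkeeping caveat is well placed rather than a gap.
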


\subsection*{Generalized multi-marginal Sinkhorn algorithm} Analogously to \eqref{eq:IPFPiteration}, define recursively the sequences $(u^n_j)_{n\in\NN}, j\in \lbrace 1,\dots,N\rbrace$ (Sinkhorn iterates) by
\begin{equation}\label{eq:IPFPsequenceN}
\begin{array}{lcl}
u_1^0(x_1) & = & 1, \\
u^0_j(x_j) & = & 1, \quad j \in \lbrace 2,\dots,N \rbrace, \\
u^n_j(x_j) & = &\argmmax_{u\in\Lg(\rho_j)}\left\lbrace \int_{X_j}u d\rho_j -\ep\int_{\Pi^{N}_{i\neq j}X_i}\Psi\left(\frac{\sum^{j-1}_{i=1}u^{n}_i+u+\sum^N_{i=j+1}u^{n-1}_i-c_N}{\ep}\right)d\left(\otimes^N_{i\neq j}\rho_i \right) \right\rbrace.
\end{array}
\end{equation}
We observe that $u^n_i(x_i) = \Ficep(\hat{u}^n_i)(x_i), \, \forall \, i$. In the particular case when $\Phi$ is the Shannon entropy, one can rewrite the Sinkhorn sequences \eqref{eq:IPFPsequenceN} more explicitly by
\begin{align*}
u^n_j(x_j) &= - \ep\log\left(\int_{\Pi_{i \neq j}X_i}\underset{i<j}{\otimes}e^{u^n_i(x_i)/\ep}\underset{i>j}{\otimes}e^{u^{n-1}_i(x_i)/\ep}e^{-c_N(x_1,\dots,x_N)/\ep} d\left(\otimes^N_{i\neq j}\rho_i\right)\right).
\end{align*}

Finally, we state the convergence of the generalized Sinkhorn algorithm in the multi-marginal case. The proof of the theorem is omitted since it follows similarly to the method applied in Theorem \ref{thm:convIPFP}.
\begin{theorem}\label{thm:convIPFPNmarg}
Let $(X_1,d_1), \dots, (X_N,d_N)$ be complete separable metric spaces, $\rho_1,\dots,\rho_N$ be probability measures in $X_1,\dots,X_N$, $c_N:X_1\times\dots\times X_N\to[0,+\infty]$ be a bounded cost, $\Phi$ be a Entropy function, $p$ be an integer $1\leq p <\infty$. If $(u^n_j)_{n\in\NN}, j\in\lbrace 1,\dots,N\rbrace$ are the generalized Sinkhorn sequences defined in \eqref{eq:IPFPsequenceN}, then there exist a sequence $\lambda^n \in \R^N$, with $\lambda^n_i >0$ and $\sum_{i=1}^N \lambda^n_i = 0$ such that
\[
\forall \, j\in\lbrace 1,\dots,N\rbrace, \quad  u^n_j + \lambda^n_j\to u_j \text{ in } L^p(\rho_j),
\]
where $(u_j)_{j=1}^N$ solve dual problem \eqref{eq:dualN}. In particular, the sequence $(\gamma^n)_{n\in\NN}$,
\[
\gamma^n = \Psi'\left(\dfrac{\sum^N_{i=1}u^n_i(x_i)-c_N(x_1,\dots,x_N)}{\ep}\right), 
\]
converges in  $L^p(\rho_1\otimes\dots\otimes\rho_N)$ to the optimizer $\gammaep_{opt}$  in \eqref{app:KLN}. 
\end{theorem}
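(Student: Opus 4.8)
The plan is to transcribe the proof of Theorem~\ref{thm:convIPFP} to the $N$-marginal setting: the two alternating $(c,\ep,\Phi)$-transforms are replaced by the $N$ transforms $\Ficep$ of \eqref{eq:cepN} applied cyclically, and the duality of Theorem~\ref{prop:equiv_comp} is replaced by its multi-marginal counterpart, Theorem~\ref{teo:equiv_compN}. The first ingredient is the a priori estimate for the $(N,c,\ep,\Phi)$-transforms. Exactly as in Lemma~\ref{lemma:F1F2}, given the tuple $\hat u_i=(u_j)_{j\ne i}$ the value $\Ficep(\hat u_i)(x_i)$ is the unique $t$ solving
\[
\int_{\prod_{j\ne i}X_j}\Psi'\!\left(\frac{\sum_{j\ne i}u_j(x_j)+t-c_N}{\ep}\right)d\Big(\otimes_{j\ne i}\rho_j\Big)=1,
\]
and the same monotonicity argument (using that $\Psi'$ is increasing and that varying $x_i$ changes $c_N$ by at most $2\|c_N\|_\infty$) yields ${\rm osc}(\Ficep(\hat u_i))\le 2\|c_N\|_\infty$ and preservation of any modulus of continuity of $c_N$. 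The argument of Proposition~\ref{prop:EstPot} then carries over essentially unchanged, showing that each $\Ficep$ is a $1$-Lipschitz compact operator from $L^\infty$ into $L^p(\rho_i)$, $1\le p<\infty$. Finally, the normalization of Lemma~\ref{lemma:betterpotentials} gives, for each $n$ large enough that every coordinate has been updated at least once, a vector $\lambda^n\in\R^N$ with $\sum_i\lambda^n_i=0$ and $\|u^n_i+\lambda^n_i\|_\infty\le 2\|c_N\|_\infty$ for all $i$.

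Next, every step of \eqref{eq:IPFPsequenceN} is a one-coordinate maximization of $\DNep$, so $\DNep(u^n_1,\dots,u^n_N)$ is nondecreasing and, by the multi-marginal analogue of Proposition~\ref{prop:dual}, bounded above by $\OTNep(\rho_1,\dots,\rho_N)$; hence it converges and its increment over any fixed number of steps vanishes. Combining this with the compactness above, I extract a subsequence along which $u^n_i+\lambda^n_i\to u_i$ in $L^p(\rho_i)$ for every $i$. Using the $L^p$-continuity of the $\Ficep$ and the vanishing of the one-step $\DNep$-increments, I pass to the limit in each of the $N$ updating relations $u^{n+1}_i=\Ficep(\hat u^n_i)$ to obtain $\Ficep(\hat u_i)=u_i$ for all $i$. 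To promote subsequential to full convergence, as in the two-marginal case I choose $\lambda^n$ minimizing $\sum_i\|u^n_i+\lambda^n_i-u_i\|_{L^2}^2$ subject to $\sum_i\lambda^n_i=0$. The simultaneous fixed-point property $\Ficep(\hat u_i)=u_i$ for all $i$ is exactly condition~2 of Theorem~\ref{teo:equiv_compN}, whose implication $2\Rightarrow 1$ gives that $(u_1,\dots,u_N)$ maximizes \eqref{eq:dualN} (and that $u_i\in L^\infty(\rho_i)$), so $u^n_j+\lambda^n_j\to u_j$ with $(u_j)$ solving the dual. For the plans, since $\sum_i\lambda^n_i=0$ we have $\sum_iu^n_i=\sum_i(u^n_i+\lambda^n_i)\to\sum_iu_i$ in $L^p(\rho^N)$, and the family $\sum_iu^n_i-c_N$ is uniformly bounded; since $\Psi'$ is Lipschitz on bounded intervals, $\gamma^n=\Psi'\!\big((\sum_iu^n_i-c_N)/\ep\big)\to\Psi'\!\big((\sum_iu_i-c_N)/\ep\big)$ in $L^p(\rho^N)$, and by Theorem~\ref{teo:equiv_compN}(3) the limit is the density $\gammaep_{opt}$ of the (unique) minimizer of \eqref{app:KLN}.

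The main obstacle is the bookkeeping of the $N$-fold cyclic structure: one must check that vanishing of the $\DNep$-increments over a full sweep (equivalently over each single update) forces the limit to be a fixed point of all $N$ transforms simultaneously, not merely of the composite sweep map, and that the normalizing translations $\lambda^n$ may be chosen with $\sum_i\lambda^n_i=0$ so that they cancel in the sum entering $\gamma^n$. Both follow from the elementary remark that a bounded monotone sequence has increments tending to $0$ over any fixed window; the remaining estimates are word-for-word the two-marginal ones, with $u+v-c$ replaced by $\sum_iu_i-c_N$ and $\rho_1\otimes\rho_2$ by $\rho^N$.
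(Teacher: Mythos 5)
Your proposal is correct and follows exactly the route the paper itself indicates (the paper omits the proof of Theorem~\ref{thm:convIPFPNmarg} with the remark that it ``follows similarly to the method applied in Theorem~\ref{thm:convIPFP}''), so your write-up simply fills in the details the authors left implicit. You also correctly identify and resolve the only genuinely new wrinkle—the cyclic within-sweep structure of \eqref{eq:IPFPsequenceN}, where $u^n_j$ is computed from the already-updated $u^n_1,\dots,u^n_{j-1}$ and the stale $u^{n-1}_{j+1},\dots,u^{n-1}_N$—by the cascading passage to the limit one coordinate at a time, using that each single coordinate update increases $\DNep$ and the full-sweep increment vanishes, together with the choice $\sum_i\lambda^n_i=0$ so the translations cancel in $\sum_i u^n_i$.
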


\end{document}